\newtheorem{theorem}{Theorem}[section]
\newtheorem{lemma}[theorem]{Lemma}
\newtheorem{prop}[theorem]{Proposition}
\newtheorem{cor}[theorem]{Corollary}
\theoremstyle{definition}
\theoremstyle{remark}
\newtheorem{remark}[theorem]{\bf{Remark}}
\numberwithin{equation}{section}
\begin{document}

\title [ Numerical radius inequalities of $2 \times 2$ operator matrices] 
       {Numerical radius inequalities of $2 \times 2$ operator matrices }

\author[P. Bhunia and K. Paul]{Pintu Bhunia and Kallol Paul}

\address{(Bhunia) Department of Mathematics, Jadavpur University, Kolkata 700032, West Bengal, India}
\email{pintubhunia5206@gmail.com}

\address{(Paul) Department of Mathematics, Jadavpur University, Kolkata 700032, West Bengal, India}
\email{kalloldada@gmail.com;kallol.paul@jadavpuruniversity.in}

%\thanks will become a 1st page footnote.
\thanks{First author would like to thank UGC, Govt. of India for the financial support in the form of Senior Research Fellowship}
\thanks{}
\thanks{}
%    Information for second author

%    General info

\subjclass[2010]{47A12, 47A30}
\keywords{ Numerical radius; Operator norm; Bounded linear operator; Hilbert space;  Operator matrix}

%\date{}
\maketitle

\begin{abstract}
Several upper and lower bounds for the numerical radius of $2 \times 2$ operator matrices are developed which  refine and generalize the earlier related bounds. In particular, we show that if $B,C$ are bounded linear operators on a complex Hilbert space, then
\begin{eqnarray*}
 && \frac{1}{2}\max \left \{ \|B\|, \|C\| \right \}+\frac{1}{4} \left | \|B+C^*\|-\|B-C^*\| \right |\\
 &&\leq w \left(\left[\begin{array}{cc}
		0 & B\\
		C& 0
	\end{array}\right]\right)\\ 
&&\leq \frac{1}{2} \max \left\{\|B\|,\|C\|\right \}+\frac{1}{2}\max \left \{r^{\frac{1}{2}}(|B||C^*|),r^{\frac{1}{2}}(|B^*||C|)\right\}, 
\end{eqnarray*}
where $w(.)$, $r(.)$ and $\|.\|$ are the numerical radius, spectral radius and  operator norm of a bounded linear operator, respectively. We also obtain equality conditions for the numerical radius of the operator matrix  $\left[\begin{array}{cc}
	0 & B\\
	C& 0
\end{array}\right]$. As application of results obtained, we  show that if $B,C$ are self-adjoint operators then,
$\max \Big \{\|B+C\|^2 , \|B-C\|^2  \Big\}\leq  \left \|B^2+C^2 \right \|+2w(|B||C|). $

\end{abstract}

\section{\textbf{Introduction}}

\noindent Let  $ \mathcal{H}$ be a complex Hilbert space with inner product  $\langle.,.\rangle$ and  let $ \mathcal{B}(\mathcal{H}) $ be the collection of all bounded linear operators on $ \mathcal{H}.$  As usual the norm induced by the inner product $\langle.,.\rangle$  is denoted by $ \Vert . \Vert.$  For  $A \in \mathcal{B}(\mathcal{H})$, let $\|A\|$ be the operator norm of $A,$ i.e., $ \Vert A \Vert = \sup_{\Vert x \Vert =1} \Vert Ax \Vert.$  For  $A \in \mathcal{B}(\mathcal{H})$, $A^*$ denotes the adjoint of $A$ and $|A|, |A^*|$  respectively denote the positive part of $A,A^*$, i.e., $|A|= (A^*A)^{\frac{1}{2}} , |A^*|= (AA^*)^{\frac{1}{2}}.$ The real part and the imaginary part of $A$ are denoted by $\Re(A)$ and $\Im(A)$ respectively so that  $\Re(A)=\frac{A+A^*}{2}$ and $\Im(A)=\frac{A-A^*}{2\rm i}$.
 % Let $S_{\mathcal{H}}$ denote the unit sphere of the Hilbert space $\mathcal{H}.$  
 The numerical range of $A$, denoted by $W(A),$ is defined as  $W(A)=\big\{ \langle Ax,x\rangle~~:~~x\in {\mathcal{H}}, \|x\|=1\big\}.$ 
 %Considering the continuous mapping $ x \longmapsto \langle Ax,x \rangle $  from $S_{\mathcal{H}} $ to the scalar field $\mathbb{C},$  
It is well known that $\overline{W(A)}$ is a  compact subset of $\mathbb{C}$. The famous Toeplitz-Hausdorff theorem states that the numerical range is a convex set.
\noindent The numerical radius of $A$, denoted by $w(A)$, is defined as 
$w(A)= \sup_{\|x\|=1} |\langle Ax,x\rangle|.$
The numerical radius is a norm on $\mathcal{B}(\mathcal{H}) $ satisfying
\begin{eqnarray}\label{eqv}
\frac{1}{2}\|A\|\leq w(A)\leq \|A\|,
\end{eqnarray}
and so  the numerical radius norm is equivalent to the operator norm. The inequality  (\ref{eqv}) is sharp,  $w(A) = \|A\|$ if $A$ is normal and $ w(A)=\frac{\|A\|}{2} $ if $A^2=0.$ The spectral radius of $A$, denoted as $r(A),$ is defined as 
$r(A):= \sup_{\lambda \in \sigma(A)} |\lambda|,$
where $\sigma(A)$ is the spectrum of $A$. Since $\sigma(A)\subseteq \overline{W(A)}$, $r(A)\leq w(A)$.  For further basic properties on the  numerical range and the numerical radius of bounded linear operators, we refer to \cite{GR}. Various refinements of (\ref{eqv}) have been obtained recently, a few of them are in \cite{BBP2,BBP4,BBP5,BBP1,BPN}.

\noindent The direct sum of two copies of $\mathcal{H}$ is denoted by $\mathcal{H}\oplus \mathcal{H}.$ If $A,B,C,D\in \mathcal{B}(\mathcal{H})$, then the operator matrix $\left[\begin{array}{cc}
A & B\\
C& D
\end{array}\right]$ can be considered as an operator on $\mathcal{H}\oplus \mathcal{H},$ and  is defined by $\left[\begin{array}{cc}
A & B\\
C& D
\end{array}\right]x=\left ( \begin{array}{c}
Ax_1+Bx_2\\
Cx_1+Dx_2
\end{array}   \right),  \forall x=\left ( \begin{array}{c}
x_1\\
x_2
\end{array}   \right)\in \mathcal{H}\oplus \mathcal{H}.$

In this paper, we obtain several  upper and lower bounds for the numerical radius of $2 \times 2$ operator matrices. The bounds obtained here improve and generalize the earlier related bounds. We also  obtain equality conditions for the numerical radius of $\left[\begin{array}{cc}
0 & B\\
C& 0
\end{array}\right],$ where $`0$' denotes the zero operator on $\mathcal{H}.$ An application of some of our obtained bounds, we give norm inequalities for sums and differences of self-adjoint operators. 

\section{\textbf{Main results}}
We begin this section with the following well known lemmas. The first lemma can be found in \cite[Lemma 2.1]{HKS}.

\begin{lemma}\label{lemmma}
	Let $A,B,C,D\in \mathcal{B}(\mathcal{H})$. Then
\begin{enumerate}
	
	\item $w \left(\left[\begin{array}{cc}
		A & 0\\
		0& D
	\end{array}\right]\right)=\max\{  w(A), w(D)\}$.

	\item $w \left(\left[\begin{array}{cc}
	A & B\\
	B& A
	\end{array}\right]\right)=\max \{w(A+B),w(A-B)\}.$
	
	In particular, $w \left(\left[\begin{array}{cc}
		0 & B\\
		B& 0
	\end{array}\right]\right)=w(B).$
	\end{enumerate}

\end{lemma}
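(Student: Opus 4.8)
The plan is to establish part (1) directly from the definition of the numerical radius, and then to derive part (2) by exhibiting a unitary that transforms the circulant-type matrix into a block-diagonal one, so that part (1) applies.

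For part (1), I would fix a unit vector $x=(x_1,x_2)\in\mathcal{H}\oplus\mathcal{H}$, so that $\|x_1\|^2+\|x_2\|^2=1$, and compute $\langle Tx,x\rangle=\langle Ax_1,x_1\rangle+\langle Dx_2,x_2\rangle$, where $T=\left[\begin{smallmatrix}A&0\\0&D\end{smallmatrix}\right]$. Using the scaling bound $|\langle Ax_1,x_1\rangle|\le w(A)\|x_1\|^2$ (which holds also when $x_1=0$) together with its analogue for $D$, the triangle inequality gives $|\langle Tx,x\rangle|\le w(A)\|x_1\|^2+w(D)\|x_2\|^2\le\max\{w(A),w(D)\}$; taking the supremum over unit vectors yields $w(T)\le\max\{w(A),w(D)\}$. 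The reverse inequality is immediate by restricting to vectors of the form $(x_1,0)$ and $(0,x_2)$, which force $w(T)\ge w(A)$ and $w(T)\ge w(D)$ respectively.

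For part (2), the key step is to introduce the self-adjoint unitary $U=\tfrac{1}{\sqrt2}\left[\begin{smallmatrix}I&I\\I&-I\end{smallmatrix}\right]$ on $\mathcal{H}\oplus\mathcal{H}$, for which $U=U^*$ and $U^2=I$. A direct block multiplication shows that $U\left[\begin{smallmatrix}A&B\\B&A\end{smallmatrix}\right]U=\left[\begin{smallmatrix}A+B&0\\0&A-B\end{smallmatrix}\right]$. Since the numerical radius is invariant under unitary similarity, I can conclude that $w\left(\left[\begin{smallmatrix}A&B\\B&A\end{smallmatrix}\right]\right)=w\left(\left[\begin{smallmatrix}A+B&0\\0&A-B\end{smallmatrix}\right]\right)$, and then part (1) evaluates the right-hand side as $\max\{w(A+B),w(A-B)\}$. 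The special case follows on setting $A=0$ and noting that $w(-B)=w(B)$.

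Both parts are essentially routine; the only genuine insight is guessing the diagonalizing unitary $U$ in part (2), and once that is in hand the entire statement reduces to part (1). I therefore anticipate no serious obstacle.
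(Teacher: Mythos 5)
Your proof is correct. Note that the paper itself gives no proof of this lemma at all: it is quoted as a known result from \cite[Lemma 2.1]{HKS}, so there is no internal argument to compare yours against. Your two steps --- part (1) via the scaling bound $|\langle Ax_1,x_1\rangle|\le w(A)\|x_1\|^2$ plus restriction to vectors $(x_1,0)$, $(0,x_2)$ for the reverse inequality, and part (2) via conjugation by the self-adjoint unitary $U=\tfrac{1}{\sqrt2}\left[\begin{smallmatrix}I&I\\I&-I\end{smallmatrix}\right]$ together with unitary invariance of the numerical radius --- are exactly the standard argument in the cited literature, and every step checks out.
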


The second lemma can be proved easily.

\begin{lemma}
	Let $A,D\in \mathcal{B}(\mathcal{H})$. Then
		$$ \left \| \left[\begin{array}{cc}
	A & 0\\
	0& D
	\end{array}\right]\right\|=\left \| \left[\begin{array}{cc}
	0 & A\\
	D& 0
	\end{array}\right]\right\|  =\max\{  \|A\|, \|D\| \}.$$
\end{lemma}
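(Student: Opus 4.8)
The plan is to establish both equalities directly from the defining supremum of the operator norm, treating the block-diagonal matrix first and then reducing the block-antidiagonal case to it by a unitary factorization. Throughout I write a generic vector of $\mathcal{H} \oplus \mathcal{H}$ as $x = (x_1, x_2)$, so that $\|x\|^2 = \|x_1\|^2 + \|x_2\|^2$.

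First I would handle $M = \left[\begin{smallmatrix} A & 0 \\ 0 & D \end{smallmatrix}\right]$. Since $Mx = (Ax_1, Dx_2)$, one computes $\|Mx\|^2 = \|Ax_1\|^2 + \|Dx_2\|^2 \le \|A\|^2\|x_1\|^2 + \|D\|^2\|x_2\|^2 \le \max\{\|A\|^2, \|D\|^2\}\bigl(\|x_1\|^2 + \|x_2\|^2\bigr)$, and taking the supremum over unit vectors yields $\|M\| \le \max\{\|A\|, \|D\|\}$. For the reverse inequality I would restrict to the coordinate subspaces: evaluating on vectors of the form $(x_1, 0)$ forces $\|M\| \ge \|A\|$, while vectors of the form $(0, x_2)$ force $\|M\| \ge \|D\|$, so $\|M\| \ge \max\{\|A\|, \|D\|\}$. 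This settles the first equality.

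For the antidiagonal matrix $N = \left[\begin{smallmatrix} 0 & A \\ D & 0 \end{smallmatrix}\right]$, the cleanest route is to observe the factorization $N = M U$, where $U = \left[\begin{smallmatrix} 0 & I \\ I & 0 \end{smallmatrix}\right]$ is the swap unitary on $\mathcal{H} \oplus \mathcal{H}$. Since multiplication by a unitary leaves the operator norm unchanged, $\|N\| = \|M U\| = \|M\| = \max\{\|A\|, \|D\|\}$. Alternatively one could simply repeat the estimate above using $Nx = (Ax_2, Dx_1)$ together with the same splitting of $\|x\|^2$. I do not anticipate any genuine obstacle here; this is the \emph{easily proved} lemma the authors advertise. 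The only points requiring care are the bookkeeping of the block action on $\mathcal{H} \oplus \mathcal{H}$ and identifying the correct unitary factor, both of which are routine.
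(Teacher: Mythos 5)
Your proposal is correct in all details: the direct estimate $\|Mx\|^2 = \|Ax_1\|^2 + \|Dx_2\|^2 \le \max\{\|A\|^2,\|D\|^2\}\,\|x\|^2$ combined with restriction to the coordinate subspaces settles the diagonal case, and the factorization $N = MU$ with the swap unitary $U = \left[\begin{smallmatrix} 0 & I \\ I & 0 \end{smallmatrix}\right]$ correctly transfers the result to the antidiagonal case, since right multiplication by a unitary preserves the operator norm. Note that the paper itself supplies no proof of this lemma --- it is stated with the remark that it ``can be proved easily'' --- so there is no argument in the paper to compare against; your write-up simply fills in, by the standard route, exactly the details the authors chose to omit.
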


The third lemma can be found in \cite[pp. 75-76]{halmos} which is a mixed Schwarz inequality.

\begin{lemma}\label{lem-1}
	Let $A\in \mathcal{B}(\mathcal{H})$. Then 
	\[|\langle Ax,x\rangle|\leq \langle |A|x,x\rangle^{1/2}~~\langle |A^*|x,x\rangle^{1/2},~~\forall~~x\in \mathcal{H}.\]
\end{lemma}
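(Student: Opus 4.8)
The plan is to reduce the statement to the ordinary Cauchy--Schwarz inequality by invoking the polar decomposition of $A$. Write $A = U|A|$, where $U \in \mathcal{B}(\mathcal{H})$ is the partial isometry with initial space $\overline{\mathrm{ran}\,|A|}$; then $U^*U$ is the orthogonal projection onto this initial space and fixes every vector in $\mathrm{ran}\,|A|$, so that $U^*U|A| = |A|$. Since $|A|^{1/2}$ is self-adjoint and $|A| = |A|^{1/2}|A|^{1/2}$, for every $x \in \mathcal{H}$ I would rewrite the quadratic form as
\[\langle Ax, x\rangle = \langle U|A|^{1/2}|A|^{1/2}x, x\rangle = \langle |A|^{1/2}x, \, |A|^{1/2}U^*x\rangle,\]
using that $(U|A|^{1/2})^* = |A|^{1/2}U^*$.

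Applying Cauchy--Schwarz to the right-hand side then gives
\[|\langle Ax, x\rangle| \leq \big\| |A|^{1/2}x \big\| \, \big\| |A|^{1/2}U^*x \big\|.\]
The first factor is immediate, since $\big\| |A|^{1/2}x \big\|^2 = \langle |A|x, x\rangle$. For the second factor, $\big\| |A|^{1/2}U^*x \big\|^2 = \langle U|A|U^*x, x\rangle$, so the entire inequality follows once I establish the operator identity $U|A|U^* = |A^*|$.

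This identity is the one genuine step, and I would prove it by comparing squares. Using $U^*U|A| = |A|$ together with $AA^* = U|A|\,|A|U^* = U|A|^2U^*$, I compute
\[(U|A|U^*)^2 = U|A|(U^*U)|A|U^* = U|A|^2U^* = AA^* = |A^*|^2.\]
Both $U|A|U^*$ and $|A^*|$ are positive operators, so uniqueness of the positive square root of a positive operator forces $U|A|U^* = |A^*|$. Substituting back yields $\big\| |A|^{1/2}U^*x \big\|^2 = \langle |A^*|x, x\rangle$, and combining with the Cauchy--Schwarz bound gives
\[|\langle Ax, x\rangle| \leq \langle |A|x, x\rangle^{1/2} \, \langle |A^*|x, x\rangle^{1/2},\]
as required. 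The main obstacle is nothing computational but rather bookkeeping with the initial projection $U^*U$ of the partial isometry, which is exactly what makes the identity $U^*U|A| = |A|$ available; once that is in hand, everything reduces to a direct calculation.
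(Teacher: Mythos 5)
Your proof is correct: the polar decomposition $A=U|A|$, the Cauchy--Schwarz step, and the identity $U|A|U^*=|A^*|$ (via uniqueness of the positive square root of $AA^*$) are all justified, including the needed bookkeeping $U^*U|A|=|A|$. The paper gives no proof of this lemma but cites Halmos (pp.\ 75--76), and your argument is essentially that classical one, so there is nothing further to compare.
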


The fourth lemma involving positive operators can be found in \cite[Cor. 2]{K}.
\begin{lemma}\label{lem-positive1}
Let $A,B\in \mathcal{B}(\mathcal{H})$ be positive. Then
\[\|A+B\|\leq \max\{\|A\|, \|B\|  \}+\left\|A^{1/2}B^{1/2}\right\|.\]
\end{lemma}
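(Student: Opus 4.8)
The plan is to reduce the inequality to the norm of a single $2\times 2$ operator matrix. Set $s=\|A\|$, $t=\|B\|$, and $p=\|A^{1/2}B^{1/2}\|$; note that $p=\|B^{1/2}A^{1/2}\|$ as well, since $B^{1/2}A^{1/2}=(A^{1/2}B^{1/2})^{*}$. I would factor $A+B=X^{*}X$, where $X\colon\mathcal{H}\to\mathcal{H}\oplus\mathcal{H}$ is the column operator $Xh=(A^{1/2}h,B^{1/2}h)$. A direct computation gives $X^{*}X=A+B$ and
\[
XX^{*}=\left[\begin{array}{cc} A & A^{1/2}B^{1/2}\\ B^{1/2}A^{1/2} & B\end{array}\right].
\]
Since $\|X^{*}X\|=\|X\|^{2}=\|XX^{*}\|$, this identifies $\|A+B\|$ with the norm of the displayed self-adjoint operator matrix on $\mathcal{H}\oplus\mathcal{H}$.

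Next I would bound this operator-matrix norm by the norm of the associated scalar matrix of entrywise norms. For any block operator $[T_{ij}]$ on $\mathcal{H}\oplus\mathcal{H}$ and any $x=(x_{1},x_{2})$, the triangle inequality followed by Cauchy--Schwarz gives $\|[T_{ij}]x\|\le\|M\xi\|$, where $\xi=(\|x_{1}\|,\|x_{2}\|)\in\mathbb{R}^{2}$ and $M=[\|T_{ij}\|]$ is the $2\times 2$ scalar matrix of entrywise operator norms. As $\|\xi\|=\|x\|$, this yields $\|[T_{ij}]\|\le\|M\|$. Applied to the matrix above, with $M=\left[\begin{smallmatrix} s & p\\ p & t\end{smallmatrix}\right]$, it gives $\|A+B\|\le\|M\|$.

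Finally I would compute and estimate $\|M\|$. Being a symmetric $2\times 2$ matrix with nonnegative entries, $M$ has norm equal to its largest eigenvalue, so $\|M\|=\tfrac12(s+t)+\tfrac12\sqrt{(s-t)^{2}+4p^{2}}$. The proof then closes with the elementary inequality $\tfrac12(s+t)+\tfrac12\sqrt{(s-t)^{2}+4p^{2}}\le\max\{s,t\}+p$ for $s,t,p\ge 0$: assuming $s\ge t$ without loss of generality, isolating the square root and squaring reduces it to $0\le 4p(s-t)$, which holds.

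I expect the only genuine step to be the middle matrix-of-norms bound; the factorization is a standard trick and the last step is a one-variable estimate. It is worth recording at the outset the routine facts that $A^{1/2},B^{1/2}$ are bounded (since $A,B$ are positive) and that $p=\|B^{1/2}A^{1/2}\|$, which is what makes $M$ symmetric and the eigenvalue formula available.
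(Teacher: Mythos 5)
Your proof is correct --- the factorization $A+B=X^{*}X$ with $XX^{*}=\left[\begin{array}{cc} A & A^{1/2}B^{1/2}\\ B^{1/2}A^{1/2} & B\end{array}\right]$, the passage to the $2\times 2$ matrix of entrywise norms, the eigenvalue formula for $\|M\|$, and the closing scalar estimate all check out --- but note that the paper itself gives no proof of this lemma: it is imported from Kittaneh \cite[Cor. 2]{K}, so there is no internal argument to match. Compared with the standard route, yours proves more than is asked: the quantity $\|M\|=\frac{1}{2}\left(\|A\|+\|B\|\right)+\frac{1}{2}\sqrt{\left(\|A\|-\|B\|\right)^{2}+4\left\|A^{1/2}B^{1/2}\right\|^{2}}$ that you obtain before the last step is exactly Kittaneh's sharper bound from \cite{K2002}, the very inequality the paper quotes in Remark 3.4, and your final scalar inequality shows that this sharper bound dominates the lemma. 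If one wants only the stated lemma, your factorization already suffices without the matrix-of-norms step: split $XX^{*}$ into its diagonal part and its off-diagonal part; by the triangle inequality and the paper's Lemma 2.2 (norms of diagonal and anti-diagonal operator matrices, applied with the observation $\|B^{1/2}A^{1/2}\|=\|A^{1/2}B^{1/2}\|$) one gets $\|A+B\|=\left\|XX^{*}\right\|\leq\max\left\{\|A\|,\|B\|\right\}+\left\|A^{1/2}B^{1/2}\right\|$ at once, with no eigenvalue computation. One cosmetic point: your bound $\left\|[T_{ij}]x\right\|\leq\|M\xi\|$ needs only the row-wise triangle inequality; Cauchy--Schwarz plays no role there.
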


Our first result can be stated as the following theorem.

\begin{theorem}\label{th-1}
Let $B,C\in \mathcal{B}(\mathcal{H})$. Then 
\begin{eqnarray*}
w \left(\left[\begin{array}{cc}
	0 & B\\
	C& 0
\end{array}\right]\right) &\leq& \frac{1}{2} \max \left\{\|B\|,\|C\|\right \}+\frac{1}{2}\max \left \{r^{\frac{1}{2}}(|B||C^*|),r^{\frac{1}{2}}(|B^*||C|)\right\}.
\end{eqnarray*}

\noindent \text{This inequality is sharp.}

\end{theorem}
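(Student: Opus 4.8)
The plan is to estimate the numerical radius of $T=\left[\begin{array}{cc} 0 & B\\ C& 0\end{array}\right]$ straight from the definition $w(T)=\sup_{\|x\|=1}|\langle Tx,x\rangle|$, feeding the operator $T$ itself into the mixed Schwarz inequality of Lemma \ref{lem-1}. The first computation I would carry out is that of $|T|$ and $|T^*|$: since $T^*T=\left[\begin{array}{cc} C^*C & 0\\ 0& B^*B\end{array}\right]$ and $TT^*=\left[\begin{array}{cc} BB^* & 0\\ 0& CC^*\end{array}\right]$, we get the diagonal forms $|T|=\left[\begin{array}{cc} |C| & 0\\ 0& |B|\end{array}\right]$ and $|T^*|=\left[\begin{array}{cc} |B^*| & 0\\ 0& |C^*|\end{array}\right]$. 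Applying Lemma \ref{lem-1} to $T$ and then the scalar AM--GM inequality $\sqrt{pq}\le\frac12(p+q)$ gives, for every unit vector $x$,
\[ |\langle Tx,x\rangle|\le \langle |T|x,x\rangle^{1/2}\langle |T^*|x,x\rangle^{1/2}\le \tfrac12\langle(|T|+|T^*|)x,x\rangle. \]

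Taking the supremum over unit vectors and using that $|T|+|T^*|$ is positive (so its numerical radius equals its norm), I would obtain $w(T)\le\frac12\big\||T|+|T^*|\big\|$. Now $|T|+|T^*|=\left[\begin{array}{cc} |B^*|+|C| & 0\\ 0& |B|+|C^*|\end{array}\right]$ is block diagonal, so the norm identity for such block operators (the second lemma) yields $w(T)\le\frac12\max\big\{\,\||B^*|+|C|\|,\ \||B|+|C^*|\|\,\big\}$. This reduces the whole problem to estimating the norms of two sums of \emph{positive} operators, which is exactly the situation covered by Lemma \ref{lem-positive1}.

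The final step is to apply Lemma \ref{lem-positive1} to each of $|B|+|C^*|$ and $|B^*|+|C|$. For the first, $\||B|+|C^*|\|\le\max\{\|B\|,\|C\|\}+\||B|^{1/2}|C^*|^{1/2}\|$, and similarly for the second. The key identity I would invoke here---and the one place that needs a short argument---is that for positive operators $P,Q$ one has $\|P^{1/2}Q^{1/2}\|^2=\|P^{1/2}QP^{1/2}\|=r(PQ)$; the middle equality is the $C^*$-identity and the last uses $r(XY)=r(YX)$ together with positivity of $P^{1/2}QP^{1/2}$. This converts $\||B|^{1/2}|C^*|^{1/2}\|$ into $r^{1/2}(|B||C^*|)$ and $\||B^*|^{1/2}|C|^{1/2}\|$ into $r^{1/2}(|B^*||C|)$. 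Since the two bounds share the common term $\max\{\|B\|,\|C\|\}$, taking the maximum distributes over the spectral-radius terms and produces exactly the claimed right-hand side.

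The main obstacle is conceptual rather than computational: recognizing that $T$ should be fed into the mixed Schwarz inequality as a single operator (so that $|T|,|T^*|$ come out block diagonal and the AM--GM step collapses everything to a single positive operator), and then spotting the spectral-radius reinterpretation of $\|P^{1/2}Q^{1/2}\|$ that lets Lemma \ref{lem-positive1} produce the $r^{1/2}$ terms. Sharpness is then immediate by taking $B=C=I$: the left side is $w\!\left(\left[\begin{array}{cc} 0 & I\\ I& 0\end{array}\right]\right)=w(I)=1$ by Lemma \ref{lemmma}(2), while the right side is $\frac12+\frac12\, r^{1/2}(I)=1$.
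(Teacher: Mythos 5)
Your proposal is correct and follows essentially the same route as the paper's proof: mixed Schwarz applied to the whole operator matrix, AM--GM to get $\tfrac12\||T|+|T^*|\|$, the block-diagonal norm identity, and then Lemma \ref{lem-positive1} together with the identity $\|P^{1/2}Q^{1/2}\|=r^{1/2}(PQ)$ for positive operators. The only cosmetic differences are that you prove that spectral-radius identity directly (the paper cites it from an earlier work) and you verify sharpness with $B=C=I$ rather than the paper's choice $C=0$; both are valid.
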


\begin{proof}
	Let $x\in \mathcal{H}\oplus\mathcal{H}$ with $\|x\|=1$. Then from Lemma \ref{lem-1} we have that
	\begin{eqnarray*}
	\left | \left \langle \left[\begin{array}{cc}
		0 & B\\
		C& 0
	\end{array}\right]x,x    \right \rangle \right| &\leq& 	 \left \langle  \left |\left[\begin{array}{cc}
	0 & B\\
	C& 0
\end{array}\right]\right|x,x \right \rangle^{\frac{1}{2}} \left \langle  \left |\left[\begin{array}{cc}
0 & C^*\\
B^*& 0
\end{array}\right]\right|x,x \right \rangle^{\frac{1}{2}}\\
&\leq& 	\frac{1}{2} \left ( \left \langle  \left |\left[\begin{array}{cc}
	0 & B\\
	C& 0
\end{array}\right]\right|x,x \right \rangle + \left \langle  \left |\left[\begin{array}{cc}
	0 & C^*\\
	B^*& 0
\end{array}\right]\right|x,x \right \rangle \right)\\
&=& 	\frac{1}{2} \left \langle  \left (  \left |\left[\begin{array}{cc}
	0 & B\\
	C& 0
\end{array}\right]\right|+ \left |\left[\begin{array}{cc}
0 & C^*\\
B^*& 0
\end{array}\right]\right| \right) x,x \right \rangle \\
&=& 	\frac{1}{2} \left \langle  \left[\begin{array}{cc}
	|C|+|B^*| & 0\\
	0& |B|+|C^*|
\end{array}\right]  x,x \right \rangle \\
&\leq & \frac{1}{2} w \left (  \left[\begin{array}{cc}
	|C|+|B^*| & 0\\
	0& |B|+|C^*|
\end{array}\right]\right ) \\
&=& \frac{1}{2} \max \{ \||C|+|B^*|\|,  \||B|+|C^*|\|     \}.
	\end{eqnarray*}
By considering the supremum over all $\|x\|=1$, we get
\begin{eqnarray}\label{th-1eqn}
	w\left ( \left[\begin{array}{cc}
		0 & B\\
		C& 0
	\end{array}\right] \right) &\leq& \frac{1}{2} \max \{ \||C|+|B^*|\|,  \||B|+|C^*|\|     \}.
\end{eqnarray}
Now it follows from Lemma \ref{lem-positive1} that 
\[\||C|+|B^*|\|\leq \max\{ \|B\|, \|C\|\}+ \| |C|^{\frac{1}{2}} |B^*|^{\frac{1}{2}}\|\] and 
\[\||B|+|C^*|\|\leq \max\{ \|B\|, \|C\|\}+ \| |B|^{\frac{1}{2}} |C^*|^{\frac{1}{2}}\|.\]

Hence,  from (\ref{th-1eqn}) we get, 
\begin{eqnarray*}
w\left ( \left[\begin{array}{cc}
	0 & B\\
	C& 0
\end{array}\right] \right) &\leq& \frac{1}{2} \max \{\|B\|, \|C\|     \}+\frac{1}{2} \max\{\| |B|^{\frac{1}{2}} |C^*|^{\frac{1}{2}}\|,\| |C|^{\frac{1}{2}} |B^*|^{\frac{1}{2}}\| \}.
\end{eqnarray*}
If  $A,B\in \mathcal{B}(\mathcal{H})$ are positive, then   $r^{\frac{1}{2}}(AB)=\left\|A^{1/2}B^{1/2}\right\|$, (  see \cite[Lemma 2.5]{BP2}).
Therefore, 
\begin{eqnarray*}
	w\left ( \left[\begin{array}{cc}
		0 & B\\
		C& 0
	\end{array}\right] \right) &\leq& \frac{1}{2} \max \{\|B\|, \|C\|     \}+\frac{1}{2} \max \left \{ r^{\frac{1}{2}}( |B| |C^*|),r^{\frac{1}{2}}( |C| |B^*| ) \right\}.
\end{eqnarray*}
This is  the required inequality. To show that the inequality is sharp, we consider $C=0$ so that  $	w\left ( \left[\begin{array}{cc}
	0 & B\\
	0& 0
\end{array}\right] \right) \leq \frac{\|B\|}{2} $, which is actually equal.
\end{proof}
\begin{remark}

In particular, considering $B=C$ in Theorem \ref{th-1} and using Lemma \ref{lemmma}, we get the inequality ( see \cite[Th. 2.1]{BP2})
$$w(B)\leq \frac{1}{2}\|B\|+\frac{1}{2}{r^{\frac{1}{2}}( |B| |B^*|)}.$$
Thus Theorem \ref{th-1} generalizes  \cite[Th. 2.1]{BP2}.
\end{remark}

We next obtain a lower bound for the numerical radius of the operator matrix $\left[\begin{array}{cc}
	0 & B\\
	C& 0
\end{array}\right].$

\begin{theorem}\label{th-5}
	Let $B,C\in \mathcal{B}(\mathcal{H})$. Then
	\[w\left(\left[\begin{array}{cc}
	0 & B\\
	C& 0
	\end{array}\right]\right) \geq \frac{1}{2}\max \{ \|B\|, \|C\|\}+\frac{1}{4}| \|B+C^*\|-\|B-C^*\|  |. \]
\end{theorem}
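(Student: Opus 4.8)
The target is a *lower* bound, so the natural strategy is to exhibit a specific family of unit vectors that makes the inner product $\left|\left\langle\left[\begin{smallmatrix}0&B\\C&0\end{smallmatrix}\right]x,x\right\rangle\right|$ large, and take a supremum. The structure of the claimed bound --- a maximum plus a quarter of the absolute difference $|\,\|B+C^*\|-\|B-C^*\|\,|$ --- strongly suggests that I should decompose the problem into the two self-adjoint pieces $\Re$ and $\Im$ of the off-diagonal operator and use the fact that for a self-adjoint operator the numerical radius equals the norm. The plan is to relate $w\left(\left[\begin{smallmatrix}0&B\\C&0\end{smallmatrix}\right]\right)$ to the norms of $B+C^*$ and $B-C^*$ via Lemma~\ref{lemmma}(2).

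First I would observe that $w$ is a norm and hence, for any two operators, $w(X)\geq \tfrac12\big(w(Y)+w(Z)\big)$-type averaging is available when $X$ can be written symmetrically in terms of $Y,Z$. Concretely, I would split $\left[\begin{smallmatrix}0&B\\C&0\end{smallmatrix}\right]=\tfrac12\left[\begin{smallmatrix}0&B+C^*\\C+B^*&0\end{smallmatrix}\right]+\tfrac12\left[\begin{smallmatrix}0&B-C^*\\C-B^*&0\end{smallmatrix}\right]$, where the first summand has the ``$A+B$-type'' symmetric shape whose numerical radius Lemma~\ref{lemmma}(2) computes. Applying the ``In particular'' case of Lemma~\ref{lemmma}(2) gives $w\left(\left[\begin{smallmatrix}0&B+C^*\\ C+B^*&0\end{smallmatrix}\right]\right)=w(B+C^*)$ and similarly for the difference, and since $B+C^*$ and $B-C^*$ need not be self-adjoint I would instead use that the relevant symmetric matrices are self-adjoint (the operator $\left[\begin{smallmatrix}0&S\\ S^*&0\end{smallmatrix}\right]$ is self-adjoint), so that their numerical radius equals the norm $\max\{\|B\pm C^*\|,\ldots\}$.

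The key mechanism I expect to drive the proof is the elementary but sharp inequality for norms: for a norm $N$, $N(u)\geq \tfrac12 N(u+v)$ and $N(u)\geq \tfrac12 N(u-v)$ are too weak, but the \emph{triangle inequality applied in both directions} yields $N(u)\geq \tfrac12\big|N(u+v)-N(u-v)\big|$ combined with $N(u)\geq\tfrac12\big(N(u+v)+N(u-v)\big)-N(v)$. Here I would set $u=\left[\begin{smallmatrix}0&B\\C&0\end{smallmatrix}\right]$ and choose $v$ to be the ``anti-symmetrized'' companion $\tfrac12\left[\begin{smallmatrix}0&B-C^*\\ C-B^*&0\end{smallmatrix}\right]$ (or use the real/imaginary decomposition into self-adjoint and skew parts) so that $u+v$ and $u-v$ become the symmetric matrices whose $w$-value is explicitly $\|B+C^*\|$ and $\|B-C^*\|$ up to the factor $\tfrac12$. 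Averaging the two resulting inequalities should produce simultaneously the $\tfrac12\max\{\|B\|,\|C\|\}$ term (from the trivial lower bound $w\geq\tfrac12\|\cdot\|$ applied to $\left[\begin{smallmatrix}0&B\\C&0\end{smallmatrix}\right]$, whose norm is $\max\{\|B\|,\|C\|\}$) and the $\tfrac14|\,\|B+C^*\|-\|B-C^*\|\,|$ correction.

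The main obstacle, I expect, is matching the two numerical constants $\tfrac12$ and $\tfrac14$ simultaneously without slack: a naive triangle-inequality estimate will typically recover only one of the two terms, and combining them requires choosing the perturbation $v$ and the averaging weights so that both the ``max'' term and the ``$\tfrac14$ absolute-difference'' term survive. I anticipate needing the precise identity $w\left(\left[\begin{smallmatrix}0&S\\ S^*&0\end{smallmatrix}\right]\right)=\|S\|$ for arbitrary $S$ (which follows since this matrix is self-adjoint with norm $\|S\|$, so its numerical radius equals its norm), together with a careful case analysis on the sign of $\|B+C^*\|-\|B-C^*\|$ to collapse the two averaged inequalities into the single stated bound. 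Verifying that the constant $\tfrac14$ (rather than $\tfrac12$) is exactly what the averaging yields will be the delicate bookkeeping step.
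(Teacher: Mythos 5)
Your starting point is sound: the real/imaginary-part decomposition, the observation that $\left[\begin{smallmatrix}0&S\\ S^*&0\end{smallmatrix}\right]$ is self-adjoint with numerical radius equal to $\|S\|$, and hence (writing $\mathbb{S}=\left[\begin{smallmatrix}0&B\\ C&0\end{smallmatrix}\right]$, $a=\|B+C^*\|$, $b=\|B-C^*\|$) the inequalities $w(\mathbb{S})\geq\frac{1}{2}a$ and $w(\mathbb{S})\geq\frac{1}{2}b$. But these are exactly the inequalities you dismiss as ``too weak,'' and the machinery you substitute for them cannot prove the theorem. Your two replacement inequalities, $w(\mathbb{S})\geq\frac{1}{2}\bigl|a-b\bigr|$ and $w(\mathbb{S})\geq\frac{1}{4}(a+b)$, are strictly weaker consequences of the dismissed pair (both follow from $w(\mathbb{S})\geq\frac{1}{2}\max\{a,b\}$), and together with the trivial bound $w(\mathbb{S})\geq\frac{1}{2}\max\{\|B\|,\|C\|\}$ they are insufficient. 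Concretely, take $\|B\|=1$ and $C=\frac{1}{2}B^*$: then $a=\frac{3}{2}$, $b=\frac{1}{2}$, and all three of your lower bounds equal $\frac{1}{2}$, while the theorem asserts $w(\mathbb{S})\geq\frac{3}{4}$ (which the dismissed inequality $w(\mathbb{S})\geq\frac{1}{2}a$ delivers at once). Moreover, the combination mechanism you rely on, averaging separate lower bounds, is structurally incapable of producing a sum of two terms: from $w\geq X$ and $w\geq Y$ one only gets $w\geq\max\{X,Y\}$, and any convex combination $\lambda X+(1-\lambda)Y$ is at most $\max\{X,Y\}$, so no choice of weights, and no case analysis on the sign of $a-b$, improves on the best single bound you hold. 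In particular, the term $\frac{1}{2}\max\{\|B\|,\|C\|\}$ cannot be sourced from $w(\mathbb{S})\geq\frac{1}{2}\|\mathbb{S}\|$ as you propose, because that is a separate lower bound and cannot be added to anything. (A smaller slip: with your choice $v=\frac{1}{2}\left[\begin{smallmatrix}0&B-C^*\\ C-B^*&0\end{smallmatrix}\right]$ the operator $u+v$ is not one of the self-adjoint symmetric matrices; the choice that works is $v=\mathbb{S}^*$.)

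The missing idea, which is the paper's route, is to keep both ``weak'' inequalities, merge them into the single inequality $w(\mathbb{S})\geq\frac{1}{2}\max\{a,b\}$, and then split this one quantity using the scalar identity $\max\{a,b\}=\frac{a+b}{2}+\frac{|a-b|}{2}$, so that
\[
w(\mathbb{S})\ \geq\ \frac{1}{4}(a+b)+\frac{1}{4}|a-b|
\]
holds as a single chain with both terms already present. The max term is then extracted from the first summand by the operator triangle inequality, $a+b=\|B+C^*\|+\|B-C^*\|\geq\|(B+C^*)\pm(B-C^*)\|$, which equals $2\|B\|$ (with $+$) or $2\|C\|$ (with $-$); hence $\frac{1}{4}(a+b)\geq\frac{1}{2}\max\{\|B\|,\|C\|\}$. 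So the term $\frac{1}{2}\max\{\|B\|,\|C\|\}$ comes from the triangle inequality applied inside $\frac{1}{4}(a+b)$, not from $w\geq\frac{1}{2}\|\cdot\|$. With this replacement your outline collapses to the paper's proof; without it, the plan as written cannot reach the stated constants.
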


\begin{proof}
	We note that for any bounded linear operator $T$,  $w(T) \geq \| \Re(T)\| $ and $ w(T) \geq \| \Im (T)\|.$ 
	So we have, $  w\left(\left[\begin{array}{cc}
	0 & B\\
	C& 0
	\end{array}\right]\right) \geq \left \|\frac{B+C^*}{2}\right \|$ and $  w\left(\left[\begin{array}{cc}
	0 & B\\
	C& 0
	\end{array}\right]\right)\geq \left \|\frac{B-C^*}{2\rm i}\right \|.$ Then 
	\begin{eqnarray*}
		w\left(\left[\begin{array}{cc}
			0 & B\\
			C& 0
		\end{array}\right]\right) &\geq& \frac{1}{2} \max \{  \left \|{B+C^*}\right \|,  \left \|{B-C^*}\right \|  \}\\
		&=& \frac{1}{4}(\left \|{B+C^*}\right \|+ \left \|{B-C^*}\right \|)+  \frac{1}{4}|\left \|{B+C^*}\right \|- \left \|{B-C^*}\right \||\\
		&\geq& \frac{1}{4}\left \|(B+C^*) \pm (B-C^*)\right \|+  \frac{1}{4}|\left \|{B+C^*}\right \|- \left \|{B-C^*}\right \||.
	\end{eqnarray*} 
	This implies that $$w\left(\left[\begin{array}{cc}
	0 & B\\
	C& 0
	\end{array}\right]\right)\geq \frac{1}{2}\max \{ \|B\|, \|C\|\}+\frac{1}{4}|~~ \|B+C^*\|-\|B-C^*\| ~~|.$$
	This completes the proof.
\end{proof}

\begin{remark}
	
	In particular, considering $B=C$ in Theorem \ref{th-5},  we get
	$$w(B)\geq \frac{\|B\|}{2}+\frac{1}{4} \left |\|B+B^*\|-\|B-B^*\| \right|.$$
	Clearly, this is an improvement of the first inequality in (\ref{eqv}), i.e., $w(B)\geq \frac{\|B\|}{2}$.
\end{remark}

Next, we need the following lemma, known as   Buzano's extension of Schwarz inequality ( see \cite{B}).

\begin{lemma}\label{lem-2}
	If $x,y,e\in \mathcal{H}$ with $\|e\|=1$, then \[|\langle x,e\rangle \langle e,y\rangle|\leq \frac{1}{2} \left(\|x\| \|y\|+|\langle x,y \rangle|\right).\]
\end{lemma}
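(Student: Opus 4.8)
The plan is to reduce the two-vector quantity $\langle x,e\rangle\langle e,y\rangle$ to the single inner product $\langle x,y\rangle$ together with a controllable error term, by exploiting the rank-one orthogonal projection determined by $e$. Since $\|e\|=1$, I would set $Pz=\langle z,e\rangle e$ for $z\in\mathcal{H}$; this $P$ is precisely the orthogonal projection onto the one-dimensional subspace spanned by $e$, so it is self-adjoint and idempotent, i.e. $P^{*}=P=P^{2}$.

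The first step is to record the algebraic identity
\[
\langle x,e\rangle\langle e,y\rangle=\tfrac{1}{2}\Big(\langle x,y\rangle+\langle x,(2P-I)y\rangle\Big).
\]
This follows by noting that $\langle x,Py\rangle=\langle x,e\rangle\langle e,y\rangle$ directly from the definition of $P$, and then expanding $\langle x,(2P-I)y\rangle=2\langle x,Py\rangle-\langle x,y\rangle$ and rearranging. In other words, the desired product is exactly the average of $\langle x,y\rangle$ with the quantity obtained after reflecting $y$ through the line $\mathrm{span}\{e\}$.

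The second step is to identify the nature of the operator $2P-I$. Because $P$ is an orthogonal projection, $2P-I=P-(I-P)$ is a self-adjoint unitary (a reflection): using $P^{2}=P=P^{*}$ one checks $(2P-I)^{2}=I$. Hence $\|2P-I\|=1$, and in particular $\|(2P-I)y\|\le\|y\|$. The third step then combines the identity with the triangle inequality and the Cauchy–Schwarz inequality: applying these to the displayed identity gives
\[
|\langle x,e\rangle\langle e,y\rangle|\le\tfrac{1}{2}\Big(|\langle x,y\rangle|+|\langle x,(2P-I)y\rangle|\Big)\le\tfrac{1}{2}\Big(|\langle x,y\rangle|+\|x\|\,\|y\|\Big),
\]
which is exactly the claimed inequality.

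I do not expect a genuine obstacle here: once the right auxiliary object is in hand, the argument is just an identity followed by Cauchy–Schwarz. The only nontrivial insight is the first step — recognizing that the cross term $\langle x,e\rangle\langle e,y\rangle$ is governed by the reflection $2P-I$ of unit operator norm, so that it can be split into the genuine inner product $\langle x,y\rangle$ plus a term bounded by $\|x\|\,\|y\|$. Everything after that is routine, and the normalization $\|e\|=1$ is used precisely to guarantee that $P$ is a projection and that $2P-I$ is an isometry.
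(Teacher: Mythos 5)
Your proof is correct. There is, however, nothing in the paper to compare it against: the paper states this lemma as Buzano's extension of the Schwarz inequality and simply cites Buzano's 1974 paper \cite{B}, giving no proof of its own. Your argument is a valid, self-contained derivation --- essentially the classical reflection proof of Buzano's inequality (the short proof usually attributed to Fujii and Kubo). Every step checks out: with $Pz=\langle z,e\rangle e$ one has $\langle x,Py\rangle=\langle x,e\rangle\langle e,y\rangle$, whence the identity $\langle x,e\rangle\langle e,y\rangle=\tfrac{1}{2}\left(\langle x,y\rangle+\langle x,(2P-I)y\rangle\right)$; the hypothesis $\|e\|=1$ gives $P^2=P=P^*$, so $(2P-I)^2=I$ and $2P-I$ is a self-adjoint unitary with $\|(2P-I)y\|=\|y\|$; the triangle inequality and Cauchy--Schwarz then yield exactly the stated bound. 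What your route buys, compared with the paper's bare citation, is a transparent and elementary proof that exposes the mechanism: the cross term $\langle x,e\rangle\langle e,y\rangle$ is the average of the true inner product $\langle x,y\rangle$ and a reflected inner product whose modulus is at most $\|x\|\,\|y\|$, which is precisely why the bound is the arithmetic mean of $\|x\|\,\|y\|$ and $|\langle x,y\rangle|$.
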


Using  the above lemma we prove the following theorem.

\begin{theorem}\label{th-2}
If $B,C\in \mathcal{B}(\mathcal{H})$, then
\begin{eqnarray*}
w^2 \left(\left[\begin{array}{cc}
	0 & B\\
	C& 0
\end{array}\right]\right) &\leq& \frac{1}{4} \max \left\{\|  |B|^2+|C^*|^2\|, \|  |B^*|^2+|C|^2 \|  \right \}\\
&&  +\frac{1}{2}\max \left\{w(|B||C^*|), w(|C||B^*|) \right\}.
\end{eqnarray*}
This inequality is sharp.
\end{theorem}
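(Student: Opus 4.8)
The plan is to bound the quantity
$\left|\left\langle \left[\begin{smallmatrix} 0 & B\\ C & 0\end{smallmatrix}\right]x,x\right\rangle\right|^2$
for a unit vector $x\in\mathcal{H}\oplus\mathcal{H}$ and then take the supremum, mirroring the structure of the proof of Theorem \ref{th-1} but now squaring first and invoking Buzano's inequality (Lemma \ref{lem-2}) in place of the elementary AM--GM step. Writing $T=\left[\begin{smallmatrix} 0 & B\\ C & 0\end{smallmatrix}\right]$, the first step is to start from the mixed Schwarz inequality of Lemma \ref{lem-1}, which gives
$\left|\langle Tx,x\rangle\right|^2\leq \langle |T|x,x\rangle\,\langle |T^*|x,x\rangle$,
and to compute the two positive operators explicitly. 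Exactly as in Theorem \ref{th-1} one finds $|T|=\left[\begin{smallmatrix} |C| & 0\\ 0 & |B|\end{smallmatrix}\right]$ and $|T^*|=\left[\begin{smallmatrix} |B^*| & 0\\ 0 & |C^*|\end{smallmatrix}\right]$.

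The heart of the argument is to rewrite $\langle |T|x,x\rangle\,\langle |T^*|x,x\rangle$ in a form amenable to Buzano. I would set $u=|T|^{1/2}x$ and $v=|T^*|^{1/2}x$, so that $\langle |T|x,x\rangle = \|u\|^2$ and $\langle |T^*|x,x\rangle=\|v\|^2$; the product $\|u\|^2\|v\|^2$ does not directly fit Buzano, so instead the natural route is to apply Buzano to the vectors entering the geometric mean. Concretely, I expect to write $\langle |T|x,x\rangle^{1/2}\langle|T^*|x,x\rangle^{1/2}=\langle |T|^{1/2}x,|T|^{1/2}x\rangle^{1/2}\langle |T^*|^{1/2}x,|T^*|^{1/2}x\rangle^{1/2}$ and then apply Lemma \ref{lem-2} with $e=x$ to the inner products $\langle |T|^{1/2}x, e\rangle$-type quantities after an appropriate factorization. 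The cleaner realization is: bound $\langle Px,x\rangle\langle Qx,x\rangle$ for positive $P,Q$ by applying Buzano to $\langle P^{1/2}x, P^{1/2}x\rangle\langle Q^{1/2}x,Q^{1/2}x\rangle$, producing the symmetrized estimate
$\langle Px,x\rangle\langle Qx,x\rangle \leq \tfrac12\big(\|P^{1/2}Q^{1/2}\| \cdot(\text{norm terms})\big)+\tfrac12|\langle P^{1/2}Q^{1/2}x,x\rangle|\cdot(\cdots)$,
whose diagonal block structure then splits the $\max$ into the $B$-block and $C$-block contributions.

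From the block-diagonal forms of $|T|$ and $|T^*|$, the cross terms $|T|^{1/2}|T^{*}|^{1/2}$ and the sums $|T|+|T^*|$ decouple into the two diagonal entries, which is precisely what manufactures the two expressions inside each $\max$: the operator-norm-of-sum term yields $\||B|^2+|C^*|^2\|$ and $\||B^*|^2+|C|^2\|$ (after recognizing $\langle|T|x,x\rangle+\langle|T^*|x,x\rangle$ controls these via the squares), while the Buzano correction term yields $w(|B||C^*|)$ and $w(|C||B^*|)$ once one identifies the numerical radius of the product of positive operators. I would carry out the block computation, take the supremum over unit $x$, and recognize the two resulting maxima. \textbf{The main obstacle} I anticipate is arranging the Buzano application so that the numerical radius $w(\cdot)$ of the products $|B||C^*|$ and $|C||B^*|$ emerges (rather than the operator norm, which is what a crude Cauchy--Schwarz would give); this is exactly the refinement Buzano buys, and getting the geometric-mean bookkeeping right so that the $\tfrac14$ and $\tfrac12$ constants fall out correctly — together with correctly pairing $|B|$ with $|C^*|$ and $|C|$ with $|B^*|$ in the off-diagonal products — is where the care is needed. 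Sharpness should again follow by taking $C=0$, where both sides collapse to $\tfrac14\|B\|^2$.
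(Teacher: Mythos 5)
Your proposal follows the paper's proof in outline: mixed Schwarz (Lemma \ref{lem-1}) applied to $T=\left[\begin{array}{cc} 0 & B\\ C& 0 \end{array}\right]$, the block identifications $|T|=\mathrm{diag}(|C|,|B|)$ and $|T^*|=\mathrm{diag}(|B^*|,|C^*|)$, a Buzano step, a supremum, and the same $C=0$ sharpness check. However, the decisive step --- which you yourself flag as the ``main obstacle'' --- is never carried out, and both of your concrete attempts at it fail. The correct move (and the paper's) is to apply Lemma \ref{lem-2} to the vectors $u=|T|x$ and $v=|T^*|x$ with middle unit vector $e=x$, \emph{without} square roots: since $\langle |T^*|x,x\rangle=\langle x,|T^*|x\rangle$, Buzano gives
\begin{align*}
\langle |T|x,x\rangle\,\langle |T^*|x,x\rangle \;\le\; \tfrac12\,\bigl\||T|x\bigr\|\,\bigl\||T^*|x\bigr\| \;+\; \tfrac12\,\bigl|\langle |T^*||T|x,x\rangle\bigr|.
\end{align*}
Then $\bigl\||T|x\bigr\|\,\bigl\||T^*|x\bigr\|=\langle |T|^2x,x\rangle^{1/2}\langle |T^*|^2x,x\rangle^{1/2}\le \tfrac12\langle (|T|^2+|T^*|^2)x,x\rangle$ by AM--GM, and the block computations $|T|^2+|T^*|^2=\mathrm{diag}(|C|^2+|B^*|^2,\,|B|^2+|C^*|^2)$ and $|T^*||T|=\mathrm{diag}(|B^*||C|,\,|C^*||B|)$, together with Lemma \ref{lemmma}(1) and the adjoint-invariance $w(|B^*||C|)=w(|C||B^*|)$, $w(|C^*||B|)=w(|B||C^*|)$, produce exactly the two maxima with the constants $\tfrac14$ and $\tfrac12$.

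Neither of your formulations yields this. Applying Buzano to $u=|T|^{1/2}x$, $v=|T^*|^{1/2}x$, $e=x$ bounds $\langle |T|^{1/2}x,x\rangle\langle x,|T^*|^{1/2}x\rangle$, which is not the quantity $\langle |T|x,x\rangle\langle |T^*|x,x\rangle$ you must estimate. Your ``cleaner realization'' is not an instance of Lemma \ref{lem-2} at all --- Buzano requires the form $\langle u,e\rangle\langle e,v\rangle$, and $\langle P^{1/2}x,P^{1/2}x\rangle\langle Q^{1/2}x,Q^{1/2}x\rangle$ is simply a rewriting of $\langle Px,x\rangle\langle Qx,x\rangle$, not of that form --- it contains undetermined factors ``$(\cdots)$'', and the objects it would generate, namely $\|P^{1/2}Q^{1/2}\|$ and $\langle P^{1/2}Q^{1/2}x,x\rangle$, are built from the half-power products $|C|^{1/2}|B^*|^{1/2}$, $|B|^{1/2}|C^*|^{1/2}$, whereas the theorem's correction term involves the full products $|C||B^*|$ and $|B||C^*|$; nothing in your outline squares the former up to the latter. (Your remark that $\langle|T|x,x\rangle+\langle|T^*|x,x\rangle$ ``controls the squares'' is also backwards: the squares enter exactly, via $\||T|x\|^2=\langle |T|^2x,x\rangle$; passing from first-power to second-power quadratic forms by Lemma \ref{positive} goes in the unhelpful direction and would spoil the constant $\tfrac14$.) So while the architecture matches the paper, the single inequality that simultaneously manufactures both terms of the bound is missing from your argument.
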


\begin{proof}
	Let $x\in \mathcal{H}\oplus\mathcal{H}$ with $\|x\|=1$. Then, 
	\begin{eqnarray*}
		\left | \left \langle \left[\begin{array}{cc}
			0 & B\\
			C& 0
		\end{array}\right]x,x    \right \rangle \right|^2 &\leq& 	 \left \langle  \left |\left[\begin{array}{cc}
			0 & B\\
			C& 0
		\end{array}\right]\right|x,x \right \rangle\left \langle  \left |\left[\begin{array}{cc}
			0 & C^*\\
			B^*& 0
		\end{array}\right]\right|x,x \right \rangle ,~~\text{by Lemma \ref{lem-1}}\\
	&=& 	 \left \langle  \left |\left[\begin{array}{cc}
		0 & B\\
		C& 0
	\end{array}\right]\right|x,x \right \rangle\left \langle   x,\left|\left[\begin{array}{cc}
	0 & C^*\\
	B^*& 0
\end{array}\right]\right|x \right \rangle \\
&=& 	 \left \langle  \left[\begin{array}{cc}
	|C| & 0\\
	0& |B|
\end{array}\right]x,x \right \rangle\left \langle   x,\left[\begin{array}{cc}
	|B^*| & 0\\
0	& |C^*|
\end{array}\right] x \right \rangle \\
&\leq & \frac{1}{2} \left \|  \left[\begin{array}{cc}
	|C| & 0\\
	0& |B|
\end{array}\right]x  \right\|  \left \| \left[\begin{array}{cc}
|B^*| & 0\\
0	& |C^*|
\end{array}\right] x\right \|\\
&& + \frac{1}{2} \left \langle\left[\begin{array}{cc}
	|C| & 0\\
	0& |B|
\end{array}\right]x ,  \left[\begin{array}{cc}
	|B^*| & 0\\
	0	& |C^*|
\end{array}\right] x\right \rangle, ~~\text{by Lemma \ref{lem-2}}\\
&=& \frac{1}{2} \sqrt{ \left \langle \left[\begin{array}{cc}
	|C|^2 & 0\\
	0& |B|^2
\end{array}\right]x,x  \right \rangle  \left \langle \left[\begin{array}{cc}
	|B^*|^2 & 0\\
	0	& |C^*|^2
\end{array}\right] x,x\right \rangle }\\
&& + \frac{1}{2}  \left \langle  \left[\begin{array}{cc}
	|B^*| & 0\\
	0	& |C^*|
\end{array}\right]\left[\begin{array}{cc}
	|C| & 0\\
	0& |B|
\end{array}\right]x ,  x\right \rangle\\
&\leq& \frac{1}{4} \left[ \left \langle \left[\begin{array}{cc}
		|C|^2 & 0\\
		0& |B|^2
	\end{array}\right]x,x  \right \rangle + \left \langle \left[\begin{array}{cc}
		|B^*|^2 & 0\\
		0	& |C^*|^2
	\end{array}\right] x,x\right \rangle \right]\\
&& + \frac{1}{2}  \left \langle  \left[\begin{array}{cc}
	|B^*||C| & 0\\
	0	& |C^*||B|
\end{array}\right]x ,  x\right \rangle\\
&=& \frac{1}{4}  \left \langle \left[\begin{array}{cc}
	|C|^2 +|B^*|^2& 0\\
	0& |B|^2+|C^*|^2
\end{array}\right]x,x  \right \rangle\\
 && + \frac{1}{2}  \left \langle  \left[\begin{array}{cc}
	|B^*||C| & 0\\
	0	& |C^*||B|
\end{array}\right]x ,  x\right \rangle\\
&\leq & \frac{1}{4}  w\left ( \left[\begin{array}{cc}
	|C|^2 +|B^*|^2& 0\\
	0& |B|^2+|C^*|^2
\end{array}\right]  \right )\\
&& + \frac{1}{2} w \left (  \left[\begin{array}{cc}
	|B^*||C| & 0\\
	0	& |C^*||B|
\end{array}\right] \right )\\
&=&\frac{1}{4} \max \{\| 	|C|^2 +|B^*|^2\|, \||B|^2+|C^*|^2 \|  \}\\
&&  + \frac{1}{2} \max \{w(|B^*||C|),w( |C^*||B|)\}.
\end{eqnarray*}
Taking supremum over all $ x \in \mathcal{H},\|x\|=1$, we get
\begin{eqnarray*}
	w^2 \left(\left[\begin{array}{cc}
		0 & B\\
		C& 0
	\end{array}\right]\right) &\leq& \frac{1}{4} \max \left\{\|  |B|^2+|C^*|^2\|, \|  |B^*|^2+|C|^2 \|  \right \}\\
	&&  +\frac{1}{2}\max \left\{w(|B||C^*|), w(|C||B^*|) \right\}.
\end{eqnarray*}
To show that the  inequality is sharp, we consider $C=0$. Then we get, $	w^2 \left(\left[\begin{array}{cc}
0 & B\\
0& 0
\end{array}\right]\right) \leq \frac{1}{4}\|  B\|^2,$ which is actually equal.
This completes the proof.
\end{proof}

\begin{remark}
In particular, considering $B=C$ in Theorem \ref{th-2} and using Lemma \ref{lemmma}, we get the inequality \cite[Th. 2.5]{P14}
	\[w^2(B)\leq \frac{1}{4}\| |B|^2+|B^*|^2\|+\frac{1}{2}w(|B||B^*|).\] 
	Thus Theorem \ref{th-2} generalizes  \cite[Th. 2.5]{P14}.
	\end{remark}
Our next result reads as follows.

\begin{theorem}\label{th-6}
	Let $B,C\in \mathcal{B}(\mathcal{H})$. Then
	\begin{eqnarray*}
		w^2\left(\left[\begin{array}{cc}
			0 & B\\
			C& 0
		\end{array}\right]\right) &\geq& \frac{1}{4}\max \left \{\| ~~ |B|^2+|C^*|^2~~\|, \|  ~~|B^*|^2+|C|^2 ~~\| \right \}\\
		&& +\frac{1}{8}|~~ \|B+C^*\|^2-\|B-C^*\|^2~~  |. 
	\end{eqnarray*}
	
\end{theorem}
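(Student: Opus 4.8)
The plan is to mirror the structure of Theorem \ref{th-5}, which provided the analogous lower bound for the first-order (un-squared) numerical radius, but now working at the level of squares so as to match the first term appearing in the upper bound of Theorem \ref{th-2}. The starting point is the elementary fact that for any operator $T$ one has $w(T)\geq \|\Re(T)\|$ and $w(T)\geq \|\Im(T)\|$, hence $w(T)^2 \geq \max\{\|\Re(T)\|^2, \|\Im(T)\|^2\}$. Applying this to $T=\left[\begin{smallmatrix} 0 & B\\ C & 0\end{smallmatrix}\right]$ gives $w^2(T)\geq \frac14\max\{\|B+C^*\|^2, \|B-C^*\|^2\}$, exactly as in the proof of Theorem \ref{th-5} but squared.

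Next I would rewrite this maximum using the identity $\max\{a,b\}=\frac12(a+b)+\frac12|a-b|$ with $a=\|B+C^*\|^2$ and $b=\|B-C^*\|^2$, so that
\[
w^2(T)\geq \frac{1}{8}\left(\|B+C^*\|^2+\|B-C^*\|^2\right)+\frac{1}{8}\left|\,\|B+C^*\|^2-\|B-C^*\|^2\,\right|.
\]
The second summand here is precisely the error term demanded in the statement, so the crux is to show that the first summand dominates $\frac14\max\{\||B|^2+|C^*|^2\|,\||B^*|^2+|C|^2\|\}$, i.e.\ that
\[
\frac12\left(\|B+C^*\|^2+\|B-C^*\|^2\right)\geq \max\left\{\||B|^2+|C^*|^2\|,\,\||B^*|^2+|C|^2\|\right\}.
\]

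To establish this I would fix a unit vector $x\in\mathcal H$ and use the parallelogram-type computation
\[
\|(B+C^*)x\|^2+\|(B-C^*)x\|^2 = 2\|Bx\|^2+2\|C^*x\|^2 = 2\langle(|B|^2+|C^*|^2)x,x\rangle.
\]
Since $\|B\pm C^*\|^2\geq \|(B\pm C^*)x\|^2$, adding the two and taking the supremum over unit $x$ yields $\|B+C^*\|^2+\|B-C^*\|^2\geq 2\||B|^2+|C^*|^2\|$, giving one half of the maximum. For the other half, the same parallelogram identity applied to the adjoints gives $\|(B^*+C)x\|^2+\|(B^*-C)x\|^2=2\langle(|B^*|^2+|C|^2)x,x\rangle$; one then observes that $\|B^*\pm C\|=\|(B^*\pm C)^*\|=\|B\pm C^*\|$ (choosing signs consistently), so the left-hand sum equals $\|B+C^*\|^2+\|B-C^*\|^2$ again, and the supremum over $x$ yields $\|B+C^*\|^2+\|B-C^*\|^2\geq 2\||B^*|^2+|C|^2\|$. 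Combining these bounds on the first summand with the exact error term completes the proof.

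The step I expect to require the most care is matching the two branches of the maximum over $\{\||B|^2+|C^*|^2\|,\||B^*|^2+|C|^2\|\}$ to the single quantity $\|B+C^*\|^2+\|B-C^*\|^2$; one must be attentive to which adjoint identity produces which cross term, making sure the sign choices in $\|B^*\pm C\|=\|B\pm C^*\|$ are tracked so that both suprema are genuinely bounded by the \emph{same} sum rather than by two different sums. Everything else is a routine application of the parallelogram law together with the real/imaginary part lower bounds for the numerical radius already used in Theorem \ref{th-5}.
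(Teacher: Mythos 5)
Your proposal is correct. It shares the paper's skeleton exactly: the lower bound $w^2(\mathbb{S})\geq\frac14\max\{\|B+C^*\|^2,\|B-C^*\|^2\}$ via $\|\Re(\mathbb{S})\|$ and $\|\Im(\mathbb{S})\|$, followed by the decomposition $\max\{a,b\}=\frac12(a+b)+\frac12|a-b|$, which preserves the error term $\frac18\left|\,\|B+C^*\|^2-\|B-C^*\|^2\,\right|$ untouched. Where you diverge is in the key step of showing that the average $\frac18\left(\|B+C^*\|^2+\|B-C^*\|^2\right)$ dominates $\frac14\max\left\{\||B|^2+|C^*|^2\|,\||B^*|^2+|C|^2\|\right\}$. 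The paper stays at the operator level on $\mathcal{H}\oplus\mathcal{H}$: it rewrites the average as $\frac12\left(\|\Re^2(\mathbb{S})\|+\|\Im^2(\mathbb{S})\|\right)$, applies the triangle inequality to get $\frac12\|\Re^2(\mathbb{S})+\Im^2(\mathbb{S})\|$, and then computes $\Re^2(\mathbb{S})+\Im^2(\mathbb{S})=\frac12\left(|\mathbb{S}|^2+|\mathbb{S}^*|^2\right)$ explicitly as the block-diagonal matrix $\frac12\left[\begin{smallmatrix}|C|^2+|B^*|^2 & 0\\ 0 & |B|^2+|C^*|^2\end{smallmatrix}\right]$, so that both branches of the maximum emerge at once from the norm of a block-diagonal operator. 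You instead work pointwise in $\mathcal{H}$ with the parallelogram identity $\|(B+C^*)x\|^2+\|(B-C^*)x\|^2=2\langle(|B|^2+|C^*|^2)x,x\rangle$, handle the second branch by passing to adjoints via $\|B^*\pm C\|=\|B\pm C^*\|$, and use that the norm of a positive operator is the supremum of its quadratic form. The two mechanisms encode the same cancellation (indeed the parallelogram law is exactly what underlies the identity $\Re^2+\Im^2=\frac12(|\cdot|^2+|\cdot^*|^2)$), but yours is more elementary and avoids any computation with the block operator, at the mild cost of treating the two branches of the maximum separately and tracking the adjoint sign bookkeeping, which you handle correctly.
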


\begin{proof}
	
	We note that for any bounded linear operator $T$,  $w(T) \geq \| \Re(T)\| $ and $ w(T) \geq \| \Im (T)\|.$ 
	So we have,  $  w\left(\left[\begin{array}{cc}
	0 & B\\
	C& 0
	\end{array}\right]\right) \geq \left \|\frac{B+C^*}{2}\right \|$ and $  w\left(\left[\begin{array}{cc}
	0 & B\\
	C& 0
	\end{array}\right]\right)\geq \left \|\frac{B-C^*}{2\rm i}\right \|.$  Then,
	\begin{eqnarray*}
		w^2\left(\left[\begin{array}{cc}
			0 & B\\
			C& 0
		\end{array}\right]\right) &\geq& \frac{1}{4} \max \{  \left \|{B+C^*}\right \|^2,  \left \|{B-C^*}\right \|^2  \}\\
		&=& \frac{1}{8}(\left \|{B+C^*}\right \|^2+ \left \|{B-C^*}\right \|^2)+  \frac{1}{8}|\left \|{B+C^*}\right \|^2- \left \|{B-C^*}\right \|^2|\\
		&=& \frac{1}{2} \left ( \left \|\frac{B+C^*}{2}\right \|^2+  \left \|\frac{B-C^*}{2\rm i}\right \|^2 \right )\\
		&& +  \frac{1}{8}|\left \|{B+C^*}\right \|^2- \left \|{B-C^*}\right \|^2|\\
		&=& \frac{1}{2}\left ( \left \| \Re \left (\left[\begin{array}{cc}
			0 & B\\
			C& 0
		\end{array}\right] \right)\right \|^2+ \left \| \Im \left (\left[\begin{array}{cc}
			0 & B\\
			C& 0
		\end{array}\right] \right)\right \|^2 \right)\\
		&& +  \frac{1}{8}|\left \|{B+C^*}\right \|^2- \left \|{B-C^*}\right \|^2|\\
		&=& \frac{1}{2}\left ( \left \| \Re^2 \left (\left[\begin{array}{cc}
			0 & B\\
			C& 0
		\end{array}\right] \right)\right \|+ \left \| \Im^2 \left (\left[\begin{array}{cc}
			0 & B\\
			C& 0
		\end{array}\right] \right)\right \| \right)\\
		&& +  \frac{1}{8}|\left \|{B+C^*}\right \|^2- \left \|{B-C^*}\right \|^2|\\
		&\geq & \frac{1}{2}\left ( \left \| \Re^2 \left (\left[\begin{array}{cc}
			0 & B\\
			C& 0
		\end{array}\right] \right)+ \Im^2 \left (\left[\begin{array}{cc}
			0 & B\\
			C& 0
		\end{array}\right] \right)\right \| \right)\\
		&& +  \frac{1}{8}|\left \|{B+C^*}\right \|^2- \left \|{B-C^*}\right \|^2|\\
		&= & 	\frac{1}{4} \left \| \left[\begin{array}{cc}
			|C|^2 +|B^*|^2& 0\\
			0& |B|^2+|C^*|^2
		\end{array}\right]  \right \|\\
		&& +  \frac{1}{8}|\left \|{B+C^*}\right \|^2- \left \|{B-C^*}\right \|^2|\\
		&=& \frac{1}{4}\max \left \{\| ~~ |B|^2+|C^*|^2~~\|, \|  ~~|B^*|^2+|C|^2 ~~\| \right \}\\
		&& +\frac{1}{8}|~~ \|B+C^*\|^2-\|B-C^*\|^2~~  |. 
	\end{eqnarray*} 
	This completes the proof.
\end{proof}

The following necessary condition for the equality of $w \left(\left[\begin{array}{cc}
0 & B\\
C& 0
\end{array}\right]\right) $ follows from Theorem \ref{th-6}.

\begin{prop}
	If $B,C \in \mathcal{B}(\mathcal{H})$, then $$w^2 \left(\left[\begin{array}{cc}
	0 & B\\
	C& 0
	\end{array}\right]\right) = \frac{1}{4} \max \left\{\|  |B|^2+|C^*|^2\|, \|  |B^*|^2+|C|^2 \|  \right \}$$ implies that  $~~\|B+C^*\|=\|B-C^*\|.$
\end{prop}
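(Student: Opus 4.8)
The plan is to read off the conclusion directly from the lower bound already established in Theorem~\ref{th-6}, so that no new machinery is required. For brevity I would set
\[
M := \tfrac{1}{4}\max\bigl\{\,\||B|^2+|C^*|^2\|,\ \||B^*|^2+|C|^2\|\,\bigr\},
\qquad
D := \tfrac{1}{8}\bigl|\,\|B+C^*\|^2-\|B-C^*\|^2\,\bigr|,
\]
so that Theorem~\ref{th-6} asserts precisely
\[
w^2\left(\left[\begin{array}{cc} 0 & B\\ C& 0\end{array}\right]\right)\ \geq\ M + D .
\]
Since $B,C\in\mathcal{B}(\mathcal{H})$ are bounded, every norm appearing here is finite, so $M$ and $D$ are finite nonnegative reals.

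First I would invoke the hypothesis, which says exactly that the left-hand side above equals $M$. Substituting this into the displayed inequality yields $M \geq M + D$, and because $M$ is a finite nonnegative number I may cancel it to obtain $D \leq 0$. On the other hand $D$ is manifestly nonnegative, being $\tfrac{1}{8}$ times an absolute value, so the two inequalities force $D = 0$.

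Finally I would unpack the condition $D = 0$: it gives $\bigl|\,\|B+C^*\|^2-\|B-C^*\|^2\,\bigr| = 0$, hence $\|B+C^*\|^2 = \|B-C^*\|^2$, and taking nonnegative square roots delivers $\|B+C^*\| = \|B-C^*\|$, as claimed. I do not anticipate any genuine obstacle: the entire content of the proposition is already packaged into the extra summand $D$ of Theorem~\ref{th-6}, and the proof amounts to the observation that attaining equality in the ``main'' part $M$ of that lower bound annihilates the surplus term $D$.
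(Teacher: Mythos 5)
Your proof is correct and is exactly the argument the paper intends: the Proposition is stated as an immediate consequence of Theorem~\ref{th-6}, and your substitution of the equality hypothesis into that theorem's lower bound, forcing the surplus term $\frac{1}{8}\bigl|\,\|B+C^*\|^2-\|B-C^*\|^2\,\bigr|$ to vanish, is the same deduction. No gaps; nothing further is needed.
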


\begin{remark}
	In \cite[Th. 2.2]{BBP3}, the authors obtained that 
	\begin{eqnarray*}
		w^2\left(\left[\begin{array}{cc}
			0 & B\\
			C& 0
		\end{array}\right]\right) &\geq& \frac{1}{4}\max \left \{\| ~~ |B|^2+|C^*|^2~~\|, \|  ~~|B^*|^2+|C|^2 ~~\| \right \}. 
	\end{eqnarray*}
	Clearly, Theorem \ref{th-6} refines \cite[Th. 2.2]{BBP3}.
	
\end{remark}

Our next improvement of  \cite[Th. 2.2]{BBP3} is as follows.

\begin{theorem}\label{th-3}
If $B,C\in \mathcal{B}(\mathcal{H})$, then
\begin{eqnarray*}
	w^2 \left(\left[\begin{array}{cc}
		0 & B\\
		C& 0
	\end{array}\right]\right) &\geq& \frac{1}{8}\left[ \max \left\{ \|B+C^*\|^2, \|B-C^*\|^2    \right \}+ \|B+C^*\| \|B-C^*\|\right] \\
&\geq & \frac{1}{4} \max \left\{\|  |B|^2+|C^*|^2\|, \|  |B^*|^2+|C|^2 \|  \right \}.
\end{eqnarray*}	
The inequalities are sharp.
\end{theorem}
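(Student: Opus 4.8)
The plan is to derive everything from the two real-part/imaginary-part estimates already exploited in the proofs of Theorems \ref{th-5} and \ref{th-6}, and then to push the resulting bound through two elementary scalar inequalities. Write $T=\left[\begin{array}{cc}0&B\\ C&0\end{array}\right]$, and set $a=\|B+C^*\|$ and $b=\|B-C^*\|$. As in those proofs, $\Re(T)=\frac12\left[\begin{array}{cc}0&B+C^*\\ C+B^*&0\end{array}\right]$ and $\Im(T)=\frac{1}{2\mathrm{i}}\left[\begin{array}{cc}0&B-C^*\\ C-B^*&0\end{array}\right]$. Since $C+B^*=(B+C^*)^*$ and $C-B^*=-(B-C^*)^*$, the norm identity for off-diagonal $2\times2$ operator matrices (the second lemma of this section) gives $\|\Re(T)\|=a/2$ and $\|\Im(T)\|=b/2$. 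Combining these with $w(T)\ge\|\Re(T)\|$ and $w(T)\ge\|\Im(T)\|$ produces the basic estimate $w^2(T)\ge\frac14\max\{a^2,b^2\}$, which is the engine for both displayed inequalities.

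For the first inequality I would split this basic estimate as $\frac14\max\{a^2,b^2\}=\frac18\max\{a^2,b^2\}+\frac18\max\{a^2,b^2\}$ and then weaken one copy of $\max\{a^2,b^2\}$ by the elementary bound $\max\{a^2,b^2\}\ge ab$ (valid for $a,b\ge0$). This at once yields $w^2(T)\ge\frac18\big(\max\{a^2,b^2\}+ab\big)$, the middle quantity.

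For the second inequality I would start from the scalar inequality $\max\{a^2,b^2\}+ab\ge a^2+b^2$ (immediate on checking the two cases $a\ge b$ and $b\ge a$), so that $\frac18\big(\max\{a^2,b^2\}+ab\big)\ge\frac18(a^2+b^2)$. Now $\frac18(a^2+b^2)=\frac12\big(\|\Re(T)\|^2+\|\Im(T)\|^2\big)=\frac12\big(\|\Re^2(T)\|+\|\Im^2(T)\|\big)$, using $\|S^2\|=\|S\|^2$ for the self-adjoint operators $S=\Re(T)$ and $S=\Im(T)$. The triangle inequality gives $\|\Re^2(T)\|+\|\Im^2(T)\|\ge\|\Re^2(T)+\Im^2(T)\|$, and the block computation already carried out in the proof of Theorem \ref{th-6} gives $\Re^2(T)+\Im^2(T)=\frac12\left[\begin{array}{cc}|B^*|^2+|C|^2&0\\ 0&|B|^2+|C^*|^2\end{array}\right]$, whose norm is $\frac12\max\{\||B^*|^2+|C|^2\|,\||B|^2+|C^*|^2\|\}$. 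Chaining these estimates produces $\frac18(a^2+b^2)\ge\frac14\max\{\||B|^2+|C^*|^2\|,\||B^*|^2+|C|^2\|\}$, which is exactly the right-hand side.

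Finally, for sharpness I would take $C=0$: then $a=b=\|B\|$, $w(T)=\frac12\|B\|$, and $\||B|^2\|=\||B^*|^2\|=\|B\|^2$, so all three expressions collapse to $\frac14\|B\|^2$. The only point requiring care is purely the bookkeeping: the two norm computations ($\|\Re(T)\|$ and $\|\Im(T)\|$ via the off-diagonal block lemma, and $\Re^2(T)+\Im^2(T)$ via direct block multiplication), together with the recognition that the two scalar inequalities $\max\{a^2,b^2\}\ge ab$ and $\max\{a^2,b^2\}+ab\ge a^2+b^2$ are exactly what convert the crude $\frac14\max\{a^2,b^2\}$ bound into the stated symmetric form. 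I anticipate no genuine analytic obstacle.
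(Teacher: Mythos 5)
Your proof is correct, and while your treatment of the first inequality coincides with the paper's (the paper leaves implicit exactly the splitting $\tfrac14\max\{a^2,b^2\}\ge\tfrac18\max\{a^2,b^2\}+\tfrac18ab$ that you spell out), your proof of the second inequality takes a genuinely different route. The paper applies Lemma \ref{lem-positive1} (Kittaneh's inequality $\|P+Q\|\le\max\{\|P\|,\|Q\|\}+\|P^{1/2}Q^{1/2}\|$) to the positive operators $P=\Re^2(\mathbb{S})$, $Q=\Im^2(\mathbb{S})$, then bounds $\| |\Re(\mathbb{S})||\Im(\mathbb{S})| \|\le\|\Re(\mathbb{S})\|\,\|\Im(\mathbb{S})\|$, which compares $\tfrac14\max\{\||B|^2+|C^*|^2\|,\||B^*|^2+|C|^2\|\}$ directly with $\tfrac18[\max\{a^2,b^2\}+ab]$. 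You instead interpolate the quantity $\tfrac18(a^2+b^2)$ between the two sides, using the scalar case-check $\max\{a^2,b^2\}+ab\ge a^2+b^2$ together with the plain triangle inequality $\|\Re^2(\mathbb{S})\|+\|\Im^2(\mathbb{S})\|\ge\|\Re^2(\mathbb{S})+\Im^2(\mathbb{S})\|$ — which is precisely the mechanism the paper itself uses in the proof of Theorem \ref{th-6}, not of this theorem. Your route is more elementary, since it dispenses with the nontrivial Lemma \ref{lem-positive1} entirely, and it incidentally yields the slightly finer chain
\begin{equation*}
w^2(\mathbb{S})\;\ge\;\frac18\left[\max\{a^2,b^2\}+ab\right]\;\ge\;\frac18\left(a^2+b^2\right)\;\ge\;\frac14\max\left\{\||B|^2+|C^*|^2\|,\;\||B^*|^2+|C|^2\|\right\},
\end{equation*}
i.e.\ a strengthening of the theorem's second inequality. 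What the paper's approach buys is a direct operator-norm comparison between the two stated bounds without passing through the sum $a^2+b^2$, reusing the same lemma that drives Theorem \ref{th-1}; but for the statement as given, both arguments are complete, and your sharpness discussion ($C=0$, all three quantities equal to $\tfrac14\|B\|^2$) matches the paper's.
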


\begin{proof}
Let $\mathbb{S}=\left[\begin{array}{cc}
0 & B\\
C& 0
\end{array}\right]$. First inequality follows from $w(\mathbb{S}) \geq \|\Re(\mathbb{S})\| $ and $w(\mathbb{S}) \geq \|\Im(\mathbb{S})\|.$  We next prove the second inequality. Clearly, 
\[\frac{1}{4} \| | \mathbb{S}|^2+|\mathbb{S}^*|^2 \| =\frac{1}{2} \| \Re^2(\mathbb{S})+\Im^2(\mathbb{S})   \|.\]
Now, from Lemma \ref{lem-positive1}, we get
\begin{eqnarray*}
\| \Re^2(\mathbb{S})+\Im^2(\mathbb{S})   \| &\leq& \max \{ \|\Re^2(\mathbb{S})\|,  \|\Im^2(\mathbb{S})   \|  \} +  \| |\Re(\mathbb{S})| |\Im(\mathbb{S})|   \|\\
&=&   \max \{ \|\Re(\mathbb{S})\|^2,  \|\Im(\mathbb{S})   \|^2  \} +  \| |\Re(\mathbb{S})| |\Im(\mathbb{S})|   \|.
\end{eqnarray*}
Hence, we have
\begin{eqnarray*}
\frac{1}{4} \| | \mathbb{S}|^2+|\mathbb{S}^*|^2 \| &\leq& \frac{1}{2}\max \{ \|\Re(\mathbb{S})\|^2,  \|\Im(\mathbb{S})   \|^2  \} +   \frac{1}{2}\| |\Re(\mathbb{S})| |\Im(\mathbb{S})|   \|\\
&\leq& \frac{1}{2}\max \{ \|\Re(\mathbb{S})\|^2,  \|\Im(\mathbb{S})   \|^2  \} +   \frac{1}{2}\| |\Re(\mathbb{S})| \|   \| |\Im(\mathbb{S})|   \|\\
&=& \frac{1}{2}\max \{ \|\Re(\mathbb{S})\|^2,  \|\Im(\mathbb{S})   \|^2  \} +   \frac{1}{2}\| \Re(\mathbb{S}) \|   \| \Im(\mathbb{S})   \|. 
\end{eqnarray*}
This implies that
\begin{eqnarray*}
\frac{1}{4} \left \| \left[\begin{array}{cc}
	|C|^2 +|B^*|^2& 0\\
	0& |B|^2+|C^*|^2
\end{array}\right]  \right \| &\leq& \frac{1}{2} \max \left \{ \left \|\frac{B+C^*}{2}\right \|^2,\left \|\frac{B-C^*}{2\rm i}\right \|^2 \right \}\\
&& + \frac{1}{2} \left \|\frac{B+C^*}{2}\right \| \left \|\frac{B-C^*}{2 \rm i}\right \|,
\end{eqnarray*} 
that is,
\begin{eqnarray*}
\frac{1}{4} \max \left\{\|  |B|^2+|C^*|^2\|, \|  |B^*|^2+|C|^2 \|  \right \} &\leq&  \frac{1}{8} \max \left\{ \|B+C^*\|^2, \|B-C^*\|^2    \right \}\\
&& +  \frac{1}{8} \|B+C^*\| \|B-C^*\|. 
\end{eqnarray*}
This is the second inequality of the theorem. To show that the inequalities are sharp, we consider $C=0$. Then we get $	w^2 \left(\left[\begin{array}{cc}
0 & B\\
0& 0
\end{array}\right]\right) \geq \frac{1}{4}\|  B\|^2,$ which is actually equal.
This completes the proof.
\end{proof}

The following sufficient condition for the equality of $w \left(\left[\begin{array}{cc}
0 & B\\
C& 0
\end{array}\right]\right) $ follows from  Theorem \ref{th-2} and Theorem \ref{th-3}.

\begin{prop}
	Let $B,C \in \mathcal{B}(\mathcal{H})$. If $|B||C^*|=|B^*||C|=0$, then $$w^2 \left(\left[\begin{array}{cc}
	0 & B\\
	C& 0
	\end{array}\right]\right) = \frac{1}{4} \max \left\{\|  |B|^2+|C^*|^2\|, \|  |B^*|^2+|C|^2 \|  \right \}.$$
\end{prop}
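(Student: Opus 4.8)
The plan is to write $\mathbb{S}=\left[\begin{array}{cc} 0 & B\\ C& 0 \end{array}\right]$ and to sandwich $w^2(\mathbb{S})$ between the upper bound supplied by Theorem \ref{th-2} and the lower bound supplied by Theorem \ref{th-3}, then show that the hypothesis forces these two bounds to coincide. Since both theorems already isolate the quantity $\frac{1}{4}\max\{\||B|^2+|C^*|^2\|,\||B^*|^2+|C|^2\|\}$ as their common core, all that remains is to kill the extra positive term that appears only in the upper estimate.

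First I would examine the cross term $\frac{1}{2}\max\{w(|B||C^*|),w(|C||B^*|)\}$ from Theorem \ref{th-2} under the hypothesis $|B||C^*|=|B^*||C|=0$. The first factor vanishes at once, as $|B||C^*|=0$ gives $w(|B||C^*|)=w(0)=0$. For the second factor I would use that $|C|$ and $|B^*|$ are positive, hence self-adjoint, so that $|C||B^*|=(|B^*||C|)^{*}$; consequently $|B^*||C|=0$ forces $|C||B^*|=0$ and therefore $w(|C||B^*|)=0$ as well. Substituting both vanishing values into Theorem \ref{th-2} collapses its right-hand side to give the upper estimate
$$w^2(\mathbb{S}) \leq \frac{1}{4}\max\{\||B|^2+|C^*|^2\|,\||B^*|^2+|C|^2\|\}.$$

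For the reverse inequality I would simply quote the final inequality in the two-step chain of Theorem \ref{th-3}, which holds for arbitrary $B,C$ and reads
$$w^2(\mathbb{S}) \geq \frac{1}{4}\max\{\||B|^2+|C^*|^2\|,\||B^*|^2+|C|^2\|\}.$$
Combining this with the upper estimate above yields the claimed equality. The argument is thus a direct pincer between the two theorems, and there is no genuine obstacle; the only point deserving a moment's attention is the passage from the hypothesis $|B^*||C|=0$ to $w(|C||B^*|)=0$, which relies on the adjoint identity $|C||B^*|=(|B^*||C|)^{*}$ valid precisely because positive operators are self-adjoint.
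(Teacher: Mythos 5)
Your proposal is correct and follows exactly the route the paper intends: the paper derives this proposition by combining the upper bound of Theorem \ref{th-2} (whose cross term $\frac{1}{2}\max\{w(|B||C^*|),w(|C||B^*|)\}$ vanishes under the hypothesis, via the observation $|C||B^*|=(|B^*||C|)^*=0$) with the second inequality of Theorem \ref{th-3} as the matching lower bound. Your write-up simply makes explicit the details the paper leaves to the reader.
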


\begin{remark}
	
In particular,  considering $B=C$ in Theorem \ref{th-3} and using $w\left ( \left[\begin{array}{cc}
	0 & B\\
	B& 0
	\end{array}\right] \right) =w\left (B \right) $, we get
	\begin{eqnarray*}
	w^2(B)&\geq& \frac{1}{8}\left[ \max \left\{ \|B+B^*\|^2, \|B-B^*\|^2    \right \}+ \|B+B^*\| \|B-B^*\|\right] \\
	&\geq&  \frac{1}{4}\| |B|^2+|B^*|^2\|.
	\end{eqnarray*}  
	 Thus Theorem \ref{th-3} generalizes  \cite[Th. 2.10]{BP1}.
	
\end{remark}

For next result we need the following lemma ( see  \cite[Th. 2.4]{BBP3}).

\begin{lemma}\label{lem-4}
If $A,B \in \mathcal{B}(\mathcal{H})$, then
\[\|A+B\|^2\leq 2 \max  \left \{\| |A|^2+|B|^2\|, \|  |A^*|^2+|B^*|^2\|  \right \}.\]
\end{lemma}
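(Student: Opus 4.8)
The plan is to reduce this operator-norm estimate to a pointwise inequality on unit vectors, where a single elementary convexity bound does all the work; the earlier lemmas are not needed. Fix a unit vector $x \in \mathcal{H}$. First I would apply the triangle inequality and then the scalar inequality $(s+t)^2 \le 2(s^2+t^2)$ to $s = \|Ax\|$ and $t = \|Bx\|$, obtaining
\[\|(A+B)x\|^2 \le \left(\|Ax\| + \|Bx\|\right)^2 \le 2\left(\|Ax\|^2 + \|Bx\|^2\right).\]

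Next I would rewrite each squared norm through the positive operators $|A|^2 = A^*A$ and $|B|^2 = B^*B$, using $\|Ax\|^2 = \langle A^*A x, x\rangle = \langle |A|^2 x, x\rangle$ and similarly for $B$. Adding and invoking linearity gives $\|(A+B)x\|^2 \le 2\langle (|A|^2+|B|^2)x, x\rangle$. Taking the supremum over all unit vectors $x$ converts the right-hand side into $2\||A|^2+|B|^2\|$, since the norm of the positive operator $|A|^2+|B|^2$ equals the supremum of $\langle (|A|^2+|B|^2)x,x\rangle$ over the unit sphere. This already yields $\|A+B\|^2 \le 2\||A|^2+|B|^2\|$, which is dominated by the right-hand side of the lemma because a maximum of two quantities dominates either one.

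Finally, I would note that the argument is symmetric under passing to adjoints: because $\|A+B\| = \|(A+B)^*\| = \|A^*+B^*\|$, repeating the same computation for the pair $(A^*, B^*)$ and using $|A^*|^2 = AA^*$, $|B^*|^2 = BB^*$ produces the companion bound $\|A+B\|^2 \le 2\||A^*|^2+|B^*|^2\|$. Either estimate alone already implies the stated inequality with the maximum, so the conclusion holds comfortably (indeed with the maximum replaced by the minimum).

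Honestly there is no genuine obstacle here; the only points that must be handled correctly are the direction of the convexity inequality (it is $(s+t)^2 \le 2(s^2+t^2)$, not its reverse) and the identity $\|P\| = \sup_{\|x\|=1}\langle Px, x\rangle$ for a positive operator $P$, which underlies the passage from the pointwise bound to the norm bound. A block-operator reformulation is also available—viewing $A+B$ as the row operator $[\,A\ B\,]$ acting on the diagonal vectors $(x,x)$, which ties the two terms to the norms of $[\,A\ B\,]$ and its adjoint—but that is heavier machinery than the pointwise estimate requires, so I would keep the direct route.
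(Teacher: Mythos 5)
Your proof is correct, and there is an important point of comparison to make: the paper never proves this lemma at all --- it is imported as a citation to \cite[Th. 2.4]{BBP3}, so there is no internal argument to measure yours against. What you supply is a self-contained, elementary derivation: triangle inequality, the convexity bound $(s+t)^2\le 2(s^2+t^2)$, the identity $\|Ax\|^2=\langle |A|^2x,x\rangle$, and the fact that the norm of a positive operator equals the supremum of its quadratic form over unit vectors. Each step checks out, and your closing observation is right: since $\|A+B\|=\|A^*+B^*\|$, the same computation applied to the pair $(A^*,B^*)$ gives the companion bound, so you actually prove the stronger statement $\|A+B\|^2\le 2\min\left\{\,\||A|^2+|B|^2\|,\ \||A^*|^2+|B^*|^2\|\,\right\}$, of which the lemma's max version is a weakening; this is sharp, as $A=B=I$ shows. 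A one-line variant of your argument, worth knowing, avoids even the pointwise estimate: by the parallelogram identity $(A+B)^*(A+B)+(A-B)^*(A-B)=2(A^*A+B^*B)$, one has $0\le (A+B)^*(A+B)\le 2(|A|^2+|B|^2)$, and the C*-identity $\|A+B\|^2=\|(A+B)^*(A+B)\|$ together with monotonicity of the norm on positive operators gives the same bound. Either way, your proof more than suffices for the only use the paper makes of the lemma (in the proof of Theorem \ref{th-4}).
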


%An another improvement of  \cite[Th. 2.2]{BBP3} reads as follows.

\begin{theorem}\label{th-4}
	If $B,C\in \mathcal{B}(\mathcal{H})$, then
	\begin{eqnarray*}
		w^2 \left(\left[\begin{array}{cc}
			0 & B\\
			C& 0
		\end{array}\right]\right) &\geq& \frac{1}{4\sqrt{2}}\left[  \|B+C^*\|^4+ \|B-C^*\|^4  \right]^{\frac{1}{2}} \\
		&\geq & \frac{1}{4} \max \left\{\|  |B|^2+|C^*|^2\|, \|  |B^*|^2+|C|^2 \|  \right \}.
	\end{eqnarray*}	
	The inequalities are sharp.
\end{theorem}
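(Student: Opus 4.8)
The plan is to write $\mathbb{S}=\left[\begin{smallmatrix}0&B\\C&0\end{smallmatrix}\right]$ and abbreviate $a=\|B+C^{*}\|$, $b=\|B-C^{*}\|$. Everything rests on the two bounds $w(\mathbb{S})\geq\|\Re(\mathbb{S})\|$ and $w(\mathbb{S})\geq\|\Im(\mathbb{S})\|$ already used in Theorems \ref{th-5} and \ref{th-6}. Since $\Re(\mathbb{S})=\frac12\left[\begin{smallmatrix}0&B+C^{*}\\C+B^{*}&0\end{smallmatrix}\right]$ and $\Im(\mathbb{S})=\frac1{2\mathrm i}\left[\begin{smallmatrix}0&B-C^{*}\\C-B^{*}&0\end{smallmatrix}\right]$, the second lemma on norms of off-diagonal operator matrices gives $\|\Re(\mathbb{S})\|=a/2$ and $\|\Im(\mathbb{S})\|=b/2$.

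For the first (new) inequality I would square both bounds to get $w^{2}(\mathbb{S})\geq a^{2}/4$ and $w^{2}(\mathbb{S})\geq b^{2}/4$, then square once more and add, which yields $2\,w^{4}(\mathbb{S})\geq (a^{4}+b^{4})/16$; since $\sqrt{32}=4\sqrt2$ this is exactly $w^{2}(\mathbb{S})\geq\frac{1}{4\sqrt2}(a^{4}+b^{4})^{1/2}$. This ``square-and-average'' move is the crux: merely keeping $\max\{a^{2},b^{2}\}$ would not recover the $(a^{4}+b^{4})^{1/2}$ shape.

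For the second inequality I would recycle the identity established inside the proofs of Theorems \ref{th-3} and \ref{th-6}, namely $\tfrac14\big\||\mathbb{S}|^{2}+|\mathbb{S}^{*}|^{2}\big\|=\tfrac12\big\|\Re^{2}(\mathbb{S})+\Im^{2}(\mathbb{S})\big\|$, whose left-hand side is precisely $\tfrac14\max\{\||B|^{2}+|C^{*}|^{2}\|,\||B^{*}|^{2}+|C|^{2}\|\}$. Bounding $\|\Re^{2}(\mathbb{S})+\Im^{2}(\mathbb{S})\|\leq\|\Re(\mathbb{S})\|^{2}+\|\Im(\mathbb{S})\|^{2}=(a^{2}+b^{2})/4$ by the triangle inequality (using $\|\Re^{2}(\mathbb{S})\|=\|\Re(\mathbb{S})\|^{2}$, as $\Re(\mathbb{S})$ is self-adjoint) gives $\tfrac14\max\{\cdots\}\leq(a^{2}+b^{2})/8$. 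It then remains only to verify the scalar inequality $(a^{2}+b^{2})/8\leq\frac{1}{4\sqrt2}(a^{4}+b^{4})^{1/2}$, equivalently $(a^{2}+b^{2})^{2}\leq 2(a^{4}+b^{4})$, which is the AM--QM inequality (that is, $2a^{2}b^{2}\leq a^{4}+b^{4}$).

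Sharpness is immediate on taking $C=0$: then $a=b=\|B\|$, the middle and right-hand expressions both collapse to $\|B\|^{2}/4$, and $w^{2}(\mathbb{S})=\|B\|^{2}/4$ because $\mathbb{S}^{2}=0$. I do not expect a serious obstacle, since no operator-theoretic input beyond the second lemma and the $\Re^{2}$--$\Im^{2}$ identity of Theorem \ref{th-3} is required; the only point demanding care is the orientation of the power-mean inequalities, because the new middle quantity is sandwiched and the two inequalities pull $(a^{4}+b^{4})^{1/2}$ in opposite directions — the left one lower-bounds $w^{2}(\mathbb{S})$ by averaging the squared real/imaginary parts, while the right one upper-bounds $\tfrac14\max\{\cdots\}$ through $(a^{2}+b^{2})^{2}\le 2(a^{4}+b^{4})$.
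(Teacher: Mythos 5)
Your proof is correct, and on two of its three components it coincides with the paper's: your square-and-add derivation of the first inequality ($w^4(\mathbb{S})\ge a^4/16$ and $w^4(\mathbb{S})\ge b^4/16$, hence $2w^4(\mathbb{S})\ge (a^4+b^4)/16$) is precisely the justification the paper leaves implicit behind the sentence ``First inequality follows from $w(\mathbb{S})\ge\|\Re(\mathbb{S})\|$ and $w(\mathbb{S})\ge\|\Im(\mathbb{S})\|$,'' and your sharpness check at $C=0$ is identical to the paper's. Where you genuinely diverge is the second inequality. Both arguments start from the same identity $\tfrac14\,\bigl\||\mathbb{S}|^2+|\mathbb{S}^*|^2\bigr\|=\tfrac12\,\bigl\|\Re^2(\mathbb{S})+\Im^2(\mathbb{S})\bigr\|$, but the paper then applies Lemma \ref{lem-4} (the inequality $\|A+B\|^2\le 2\max\{\||A|^2+|B|^2\|,\||A^*|^2+|B^*|^2\|\}$ from \cite{BBP3}) to the positive operators $\Re^2(\mathbb{S})$ and $\Im^2(\mathbb{S})$, obtaining the operator-level bound $\|\Re^2(\mathbb{S})+\Im^2(\mathbb{S})\|\le\sqrt{2}\,\|\Re^4(\mathbb{S})+\Im^4(\mathbb{S})\|^{1/2}$, and only afterwards uses the triangle inequality on the fourth powers; you instead apply the triangle inequality immediately, $\|\Re^2(\mathbb{S})+\Im^2(\mathbb{S})\|\le\|\Re(\mathbb{S})\|^2+\|\Im(\mathbb{S})\|^2=(a^2+b^2)/4$, and close with the scalar estimate $(a^2+b^2)^2\le 2(a^4+b^4)$. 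Your route is more elementary and self-contained: it eliminates the one nontrivial external ingredient of the paper's proof, replacing Lemma \ref{lem-4} by the AM--QM inequality for two real numbers, while the paper's route keeps a sharper operator-valued intermediate quantity (the two intermediate bounds are incomparable in general, e.g.\ yours is smaller when $\Im(\mathbb{S})=0$, the paper's is smaller when $\Re(\mathbb{S})$ and $\Im(\mathbb{S})$ have orthogonal supports). Since both intermediate bounds are dominated by $\sqrt{2}\,(\|\Re(\mathbb{S})\|^4+\|\Im(\mathbb{S})\|^4)^{1/2}$, either path yields exactly the chain asserted in Theorem \ref{th-4}, so the difference is one of economy of means rather than of strength.
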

\begin{proof}
	Let $\mathbb{S}=\left[\begin{array}{cc}
	0 & B\\
	C& 0
	\end{array}\right]$. 
First inequality follows from $w(\mathbb{S}) \geq \|\Re(\mathbb{S})\| $ and $w(\mathbb{S}) \geq \|\Im(\mathbb{S})\|.$  We next prove the second inequality.  Clearly, 
	\[\frac{1}{4} \| | \mathbb{S}|^2+|\mathbb{S}^*|^2 \| =\frac{1}{2} \| \Re^2(\mathbb{S})+\Im^2(\mathbb{S})   \|.\]
	Now, from Lemma \ref{lem-4}, we have
	\begin{eqnarray*}
		\| \Re^2(\mathbb{S})+\Im^2(\mathbb{S})   \| &\leq& \sqrt{2} \|\Re^4(\mathbb{S})+\Im^4(\mathbb{S})   \|^{\frac{1}{2}}  \\
		&\leq &  \sqrt{2} \left[ \|\Re(\mathbb{S})\|^4+  \|\Im(\mathbb{S})   \|^4 \right]^{\frac{1}{2}}.
	\end{eqnarray*}
	Hence, we have
	\begin{eqnarray*}
		\frac{1}{4} \| | \mathbb{S}|^2+|\mathbb{S}^*|^2 \| &\leq& \frac{1}{\sqrt{2}} \left[ \|\Re(\mathbb{S})\|^4+  \|\Im(\mathbb{S})   \|^4 \right]^{\frac{1}{2}}.
	\end{eqnarray*}
	This implies that
	\begin{eqnarray*}
		\frac{1}{4} \left \| \left[\begin{array}{cc}
			|C|^2 +|B^*|^2& 0\\
			0& |B|^2+|C^*|^2
		\end{array}\right]  \right \| &\leq& \frac{1}{\sqrt{2}} \left [ \left \|\frac{B+C^*}{2}\right \|^4+\left \|\frac{B-C^*}{2\rm i}\right \|^4\right ]^{\frac{1}{2}},
	\end{eqnarray*} 
	that is,
	\begin{eqnarray*}
		\frac{1}{4} \max \left\{\|  |B|^2+|C^*|^2\|, \|  |B^*|^2+|C|^2 \|  \right \} &\leq&  \frac{1}{4\sqrt{2}}  \left[ \|B+C^*\|^4+ \|B-C^*\|^4    \right ]^{\frac{1}{2}}.
	\end{eqnarray*}
This is the second inequality of the theorem. To show that the inequalities are sharp, we consider $C=0$. Then we get $	w^2 \left(\left[\begin{array}{cc}
0 & B\\
0& 0
\end{array}\right]\right) \geq \frac{1}{4}\|  B\|^2,$ which is actually equal.
This completes the proof.

\end{proof}

\begin{remark}
In particular,	considering $B=C$ in Theorem \ref{th-4}, we get the inequality ( see \cite[Th. 2.13]{BP1})
	\[w^2(B)\geq  \frac{1}{4\sqrt{2}}\left[  \|B+B^*\|^4+ \|B-B^*\|^4  \right]^{\frac{1}{2}} \geq  \frac{1}{4}\|  |B|^2+|B^*|^2\|\]
	and so Theorem \ref{th-4} is a generalization of  \cite[Th. 2.13]{BP1}.
\end{remark}

For our next result we need the following lemmas.

\begin{lemma}$($\cite[p. 20]{simon}$)$.\label{positive}
	Let $A\in \mathcal{B}(\mathcal{H})$ be positive, i.e., $A\geq 0.$ Then \[\langle Ax,x\rangle^r\leq \langle A^rx,x\rangle,\] for all $r\geq 1$ and for all $x \in \mathbb{H}$ with $\|x\|=1.$
\end{lemma}

\begin{lemma}\label{lem-5}
Let $x,y,e\in \mathcal{H}$ with $\|e\|=1$. Then we have, for $0\leq \alpha \leq 1$
\begin{eqnarray*}
|\langle x,e\rangle\langle e,y\rangle|^2 &\leq& \frac{1+\alpha}{4}\|x\|^2\|y\|^2+\frac{1-\alpha}{4}|\langle x,y\rangle|^2+\frac{1}{2}\|x\|\|y\||\langle x,y\rangle|.
\end{eqnarray*}
\end{lemma}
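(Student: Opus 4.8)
The plan is to read the claimed inequality as a quantitative refinement built on top of the square of Buzano's inequality (Lemma~\ref{lem-2}), interpolating between that squared bound and the crude Cauchy--Schwarz estimate. First I would simply square both sides of Buzano's inequality to obtain
\[
|\langle x,e\rangle\langle e,y\rangle|^2 \leq \frac{1}{4}\bigl(\|x\|\|y\|+|\langle x,y\rangle|\bigr)^2 = \frac{1}{4}\|x\|^2\|y\|^2+\frac{1}{2}\|x\|\|y\|\,|\langle x,y\rangle|+\frac{1}{4}|\langle x,y\rangle|^2,
\]
which is precisely the asserted bound in the special case $\alpha=0$.

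The key step is then to compare the right-hand side of the claimed inequality with this squared Buzano bound. Their difference is
\[
\frac{1+\alpha}{4}\|x\|^2\|y\|^2+\frac{1-\alpha}{4}|\langle x,y\rangle|^2-\frac{1}{4}\|x\|^2\|y\|^2-\frac{1}{4}|\langle x,y\rangle|^2 = \frac{\alpha}{4}\bigl(\|x\|^2\|y\|^2-|\langle x,y\rangle|^2\bigr).
\]
By the Cauchy--Schwarz inequality $|\langle x,y\rangle|\leq\|x\|\|y\|$, the factor $\|x\|^2\|y\|^2-|\langle x,y\rangle|^2$ is nonnegative, so for any $\alpha\geq 0$ this entire difference is nonnegative. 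Adding it to the squared Buzano bound immediately yields the desired inequality.

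I do not expect a genuine obstacle here: once Buzano is squared and Cauchy--Schwarz is invoked, the proof collapses to a single algebraic line, and no delicate estimate is required. The only subtlety worth recording is that the hypothesis $\alpha\leq 1$ is never actually used in establishing the inequality, which in fact holds for every $\alpha\geq 0$; the restriction to $0\leq\alpha\leq 1$ presumably serves to keep the coefficient $\tfrac{1-\alpha}{4}$ nonnegative, which is the regime in which the bound will later be applied.
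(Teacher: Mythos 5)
Your proof is correct and is essentially the paper's own argument: both square Buzano's inequality (Lemma \ref{lem-2}) and then use Cauchy--Schwarz to convert an $\alpha$-fraction of $|\langle x,y\rangle|^2$ into $\alpha\|x\|^2\|y\|^2$; the paper does this by splitting $|\langle x,y\rangle|^2=\alpha|\langle x,y\rangle|^2+(1-\alpha)|\langle x,y\rangle|^2$ inside the chain of estimates, while you equivalently verify that the difference of the two right-hand sides, $\tfrac{\alpha}{4}\bigl(\|x\|^2\|y\|^2-|\langle x,y\rangle|^2\bigr)$, is nonnegative. Your observation that only $\alpha\geq 0$ is actually needed (the hypothesis $\alpha\leq 1$ matters only for the later application) is also accurate.
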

  
  \begin{proof}
  From Lemma \ref{lem-2}, we have
  \begin{eqnarray*}
  |\langle x,e\rangle\langle e,y\rangle|^2 &\leq&  \frac{1}{4}(\|x\|\|y\|+|\langle x,y\rangle|)^2\\
  &=&\frac{1}{4} (\|x\|^2\|y\|^2+2\|x\|\|y\||\langle x,y\rangle|+|\langle x,y\rangle|^2)\\
   &=&\frac{1}{4} (\|x\|^2\|y\|^2+2\|x\|\|y\||\langle x,y\rangle|+\alpha |\langle x,y\rangle|^2+(1-\alpha )|\langle x,y\rangle|^2)\\
   &\leq &\frac{1}{4} (\|x\|^2\|y\|^2+2\|x\|\|y\||\langle x,y\rangle|+\alpha \|x\|^2\|y\|^2+(1-\alpha )|\langle x,y\rangle|^2)\\
   &\leq& \frac{1+\alpha}{4}\|x\|^2\|y\|^2+\frac{1-\alpha}{4}|\langle x,y\rangle|^2+\frac{1}{2}\|x\|\|y\||\langle x,y\rangle|,
  \end{eqnarray*}
as desired.
  \end{proof}

Now, we are in a position to prove our next result.

\begin{theorem}\label{th-7}
	If $B,C\in \mathcal{B}(\mathcal{H})$, then for $0\leq \alpha \leq 1$, we have
	\begin{eqnarray*}
 	&& w^4 \left(\left[\begin{array}{cc}
		0 & B\\
		C& 0
	\end{array}\right]\right) \\
 &\leq&\frac{1+\alpha}{8} \max \left\{\|  |B|^4+|C^*|^4\|, \|  |B^*|^4+|C|^4 \|  \right \}\\ &&+\frac{1-\alpha}{4} \max \left\{ w^2(BC),w^2(CB) \right\}\\
&&+ \frac{1}{4} \max \left\{\|  |B|^2+|C^*|^2\|, \|  |B^*|^2+|C|^2 \|  \right \}  \times \max \left\{ w(BC),w(CB) \right\}.
\end{eqnarray*}		
	
\end{theorem}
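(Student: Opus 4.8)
The plan is to apply the refined Buzano-type estimate of Lemma \ref{lem-5} directly to the scalar $\langle \mathbb{S}u,u\rangle$, where $\mathbb{S}=\left[\begin{array}{cc}0 & B\\ C& 0\end{array}\right]$ and $u\in\mathcal{H}\oplus\mathcal{H}$ is an arbitrary unit vector. The decisive choice is to set $x=\mathbb{S}u$, $y=\mathbb{S}^{*}u$ and $e=u$ in Lemma \ref{lem-5}. Since $\langle u,\mathbb{S}^{*}u\rangle=\langle \mathbb{S}u,u\rangle$, the quantity $\langle x,e\rangle\langle e,y\rangle$ equals $\langle \mathbb{S}u,u\rangle^{2}$, so the left-hand side $|\langle x,e\rangle\langle e,y\rangle|^{2}$ is exactly $|\langle \mathbb{S}u,u\rangle|^{4}$; taking the supremum over unit vectors $u$ at the end will then produce $w^{4}(\mathbb{S})$ with no mixed-Schwarz step required.

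Next I would translate the three terms on the right of Lemma \ref{lem-5} into operator-theoretic quantities. Using $\|\mathbb{S}u\|^{2}=\langle |\mathbb{S}|^{2}u,u\rangle$ and $\|\mathbb{S}^{*}u\|^{2}=\langle |\mathbb{S}^{*}|^{2}u,u\rangle$, together with the identity $\langle \mathbb{S}u,\mathbb{S}^{*}u\rangle=\langle \mathbb{S}^{2}u,u\rangle$, the cross term becomes governed by $\mathbb{S}^{2}=\left[\begin{array}{cc}BC & 0\\ 0& CB\end{array}\right]$; this is exactly where the factors $w(BC)$ and $w(CB)$ enter, through $w(\mathbb{S}^{2})=\max\{w(BC),w(CB)\}$ (Lemma \ref{lemmma}(1)). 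For the first term $\tfrac{1+\alpha}{4}\langle |\mathbb{S}|^{2}u,u\rangle\langle |\mathbb{S}^{*}|^{2}u,u\rangle$ I would apply the arithmetic--geometric mean inequality to split the product and then invoke Lemma \ref{positive} with $r=2$ to pass from $\langle |\mathbb{S}|^{2}u,u\rangle^{2}$ to $\langle |\mathbb{S}|^{4}u,u\rangle$ (and similarly for $|\mathbb{S}^{*}|$); since $|\mathbb{S}|^{4}+|\mathbb{S}^{*}|^{4}=\left[\begin{array}{cc}|C|^{4}+|B^{*}|^{4} & 0\\ 0& |B|^{4}+|C^{*}|^{4}\end{array}\right]$ is positive and diagonal, its numerical radius is $\max\{\||B|^{4}+|C^{*}|^{4}\|,\||B^{*}|^{4}+|C|^{4}\|\}$, which yields the first summand.

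For the mixed term $\tfrac12\|\mathbb{S}u\|\,\|\mathbb{S}^{*}u\|\,|\langle \mathbb{S}^{2}u,u\rangle|$ I would bound $\|\mathbb{S}u\|\,\|\mathbb{S}^{*}u\|\leq\tfrac12\langle(|\mathbb{S}|^{2}+|\mathbb{S}^{*}|^{2})u,u\rangle$ by AM--GM and $|\langle \mathbb{S}^{2}u,u\rangle|\leq w(\mathbb{S}^{2})$, while the second term $\tfrac{1-\alpha}{4}|\langle \mathbb{S}^{2}u,u\rangle|^{2}\leq\tfrac{1-\alpha}{4}w^{2}(\mathbb{S}^{2})$ is immediate. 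Taking the supremum over all unit $u$ and using both that the supremum of a sum is at most the sum of the suprema and that the supremum of a product of nonnegative functions is at most the product of their suprema then converts the three pieces into the stated bound, since $\||\mathbb{S}|^{2}+|\mathbb{S}^{*}|^{2}\|=\max\{\||B|^{2}+|C^{*}|^{2}\|,\||B^{*}|^{2}+|C|^{2}\|\}$. The step I expect to demand the most care is verifying the two algebraic identities $|\langle x,e\rangle\langle e,y\rangle|^{2}=|\langle \mathbb{S}u,u\rangle|^{4}$ and $\langle \mathbb{S}u,\mathbb{S}^{*}u\rangle=\langle \mathbb{S}^{2}u,u\rangle$ under the chosen substitution; once these are secured, the remaining estimates are routine AM--GM and power-inequality arguments of the same flavour as in Theorems \ref{th-2} and \ref{th-3}.
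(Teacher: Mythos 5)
Your proposal is correct and follows essentially the same route as the paper's own proof: the same substitution $x=\mathbb{S}u$, $y=\mathbb{S}^{*}u$, $e=u$ in Lemma \ref{lem-5}, the same AM--GM splittings, the same use of Lemma \ref{positive} with $r=2$, and the same reduction of the diagonal block matrices $|\mathbb{S}|^{4}+|\mathbb{S}^{*}|^{4}$, $|\mathbb{S}|^{2}+|\mathbb{S}^{*}|^{2}$ and $\mathbb{S}^{2}$ via Lemma \ref{lemmma}(1) before taking the supremum. The two algebraic identities you flagged as delicate do hold exactly as you state them, so no step is missing.
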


\begin{proof}
Let $\mathbb{S}=\left[\begin{array}{cc}
0 & B\\
C& 0
\end{array}\right].$  
Let  $x\in \mathcal{H}\oplus \mathcal{H}$ with $\|x\|=1$. Then it follows from Lemma \ref{lem-5} that
\begin{eqnarray*}
|\langle \mathbb{S}x,x\rangle|^4
&=&|\langle \mathbb{S}x,x\rangle\langle x,\mathbb{S}^*x\rangle|^2\\
&\leq &\frac{1+\alpha}{4}\|\mathbb{S}x\|^2\|\mathbb{S}^*x\|^2+\frac{1-\alpha}{4}|\langle \mathbb{S}^2x,x\rangle|^2+\frac{1}{2}\|\mathbb{S}x\|\|\mathbb{S}^*x\||\langle \mathbb{S}^2x,x\rangle|\\
&\leq & \frac{1+\alpha}{8}(\|\mathbb{S}x\|^4+\|\mathbb{S}^*x\|^4)+\frac{1-\alpha}{4}|\langle \mathbb{S}^2x,x\rangle|^2\\
&&+\frac{1}{4}(\|\mathbb{S}x\|^2+\|\mathbb{S}^*x\|^2)|\langle \mathbb{S}^2x,x\rangle|\\
&\leq &\frac{1+\alpha}{8}\langle (|\mathbb{S}|^4 +|\mathbb{S}^*|^4)x,x\rangle+\frac{1-\alpha}{4}|\langle \mathbb{S}^2x,x\rangle|^2\\
&& +\frac{1}{4}\langle (|\mathbb{S}|^2 +|\mathbb{S}^*|^2)x,x\rangle|\langle \mathbb{S}^2x,x\rangle|,~~\mbox{using Lemma \ref{positive}}\\
&=& \frac{1+\alpha}{8} \left \langle \left[\begin{array}{cc}
	|C|^4 +|B^*|^4& 0\\
	0& |B|^4+|C^*|^4
\end{array}\right]x,x \right \rangle\\	
&&+\frac{1-\alpha}{4}	\left | \left \langle \left[\begin{array}{cc}
BC & 0\\
0& CB
\end{array}\right]x,x    \right \rangle \right|^2\\
&& +\frac{1}{4} \left \langle \left[\begin{array}{cc}
	|C|^2 +|B^*|^2& 0\\
	0& |B|^2+|C^*|^2
\end{array}\right]x,x \right \rangle	\left | \left \langle \left[\begin{array}{cc}
BC & 0\\
0& CB
\end{array}\right]x,x    \right \rangle \right|\\
& \leq & \frac{1+\alpha}{8} w\left ( \left[\begin{array}{cc}
	|C|^4 +|B^*|^4& 0\\
	0& |B|^4+|C^*|^4
\end{array}\right] \right )+\frac{1-\alpha}{4}	w^2 \left ( \left[\begin{array}{cc}
	BC & 0\\
	0& CB
\end{array}\right]\right )\\
&& +\frac{1}{4} w \left ( \left[\begin{array}{cc}
	|C|^2 +|B^*|^2& 0\\
	0& |B|^2+|C^*|^2
\end{array}\right] \right )	w \left ( \left[\begin{array}{cc}
	BC & 0\\
	0& CB
\end{array}\right]    \right )\\
&=& \frac{1+\alpha}{8} \max \left\{\|  |B|^4+|C^*|^4\|, \|  |B^*|^4+|C|^4 \|  \right \}\\
&& +\frac{1-\alpha}{4} \max \left\{ w^2(BC),w^2(CB) \right\}\\
&&+ \frac{1}{4} \max \left\{\|  |B|^2+|C^*|^2\|, \|  |B^*|^2+|C|^2 \|  \right \} \times \max \left\{ w(BC),w(CB) \right\}.
\end{eqnarray*} 
Taking  supremum over all $x \in \mathcal{H}, \|x\|=1$, we get 
\begin{eqnarray*}
	&& 	w^4 \left(\left[\begin{array}{cc}
		0 & B\\
		C& 0
	\end{array}\right]\right)\\ &&\leq\frac{1+\alpha}{8} \max \left\{\|  |B|^4+|C^*|^4\|, \|  |B^*|^4+|C|^4 \|  \right \}\\ &&+\frac{1-\alpha}{4} \max \left\{ w^2(BC),w^2(CB) \right\}\\
	&&+ \frac{1}{4} \max \left\{\|  |B|^2+|C^*|^2\|, \|  |B^*|^2+|C|^2 \|  \right \} \times \max \left\{ w(BC),w(CB) \right\}.
\end{eqnarray*}
\end{proof}

In particular, considering $B=C$ in Theorem \ref{th-7}, we get the following corollary.
\begin{cor}\label{cor1}
If $B\in \mathcal{B}(\mathcal{H})$, then for $0\leq \alpha \leq 1$,
\begin{eqnarray*}
w^4(B) \leq\frac{1+\alpha}{8}  \left \|  |B|^4+|B^*|^4 \right \| +\frac{1-\alpha}{4} w^2(B^2)
+ \frac{1}{4}\left  \|  |B|^2+|B^*|^2 \right \|w(B^2). 
\end{eqnarray*}
\end{cor}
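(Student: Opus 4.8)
The plan is to obtain this corollary directly from Theorem \ref{th-7} by specializing to $C = B$; no argument beyond Theorem \ref{th-7} is required, and the whole task reduces to checking that each side simplifies correctly under this substitution.

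First I would take the inequality in the conclusion of Theorem \ref{th-7} and replace every $C$ by $B$. On the left this yields $w^4\left(\left[\begin{array}{cc} 0 & B \\ B & 0 \end{array}\right]\right)$, and the key step is to apply the ``in particular'' clause of the second part of Lemma \ref{lemmma}, namely $w\left(\left[\begin{array}{cc} 0 & B \\ B & 0 \end{array}\right]\right) = w(B)$, which turns the left-hand side into exactly $w^4(B)$.

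Next I would collapse the three maxima on the right. Under $C = B$ each pair inside a maximum becomes identical: $\||B|^4 + |C^*|^4\|$ and $\||B^*|^4 + |C|^4\|$ both equal $\||B|^4 + |B^*|^4\|$, while $\||B|^2 + |C^*|^2\|$ and $\||B^*|^2 + |C|^2\|$ both equal $\||B|^2 + |B^*|^2\|$; moreover $BC = CB = B^2$, so $\max\{w^2(BC), w^2(CB)\} = w^2(B^2)$ and $\max\{w(BC), w(CB)\} = w(B^2)$. Substituting these into the right-hand side produces precisely the claimed bound. I do not expect any genuine obstacle here: the only points requiring a moment's care are the identification of the block numerical radius with $w(B)$ through Lemma \ref{lemmma} and the observation that the symmetry of the substitution renders every maximum trivial, both of which are routine.
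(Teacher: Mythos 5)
Your proposal is correct and is exactly the paper's argument: the corollary is obtained by setting $C=B$ in Theorem \ref{th-7}, using $w\left(\left[\begin{array}{cc} 0 & B \\ B & 0 \end{array}\right]\right)=w(B)$ from Lemma \ref{lemmma} on the left, and noting that the symmetry $|C|=|B|$, $|C^*|=|B^*|$, $BC=CB=B^2$ collapses every maximum on the right. Nothing further is needed.
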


\begin{remark}
	For every $0\leq \alpha \leq 1,$ we have
	\begin{eqnarray*}
		w^4(B) &\leq& \frac{1+\alpha}{8}  \|  |B|^4+|B^*|^4\| +\frac{1-\alpha}{4} w^2(B^2)
		+ \frac{1}{4} \|  |B|^2+|B^*|^2\|w(B^2)\\
&\leq &	\frac{1+\alpha}{8}  \|  |B|^4+|B^*|^4\| +\frac{1-\alpha}{4} \|B^2\|^2
	+ \frac{1}{4} \|  |B|^2+|B^*|^2\|\|B^2\|\\	
	&\leq &	\frac{1+\alpha}{8}  \|  |B|^4+|B^*|^4\| +\frac{1-\alpha}{4} \left \|\frac{|B|^2+|B^*|^2}{2} \right \|^2\\
	&& + \frac{1}{4} \|  |B|^2+|B^*|^2\| \left \|\frac{|B|^2+|B^*|^2}{2}\right \|, ~~\,\, \left \|B^2 \right \|\leq \frac{1}{2} \left \||B|^2+|B^*|^2\right \|\\	
		&= &	\frac{1+\alpha}{8}  \|  |B|^4+|B^*|^4\| +\frac{1-\alpha}{4} \left \|\left (\frac{|B|^2+|B^*|^2}{2} \right)^2 \right \|\\
		&&+ \frac{1}{2} \left \|\left (\frac{|B|^2+|B^*|^2}{2} \right)^2 \right \|\\
		&\leq &	\frac{1+\alpha}{8}  \|  |B|^4+|B^*|^4\| +\frac{1-\alpha}{8}\|  |B|^4+|B^*|^4\|\\
		&& + \frac{1}{4} \|  |B|^4+|B^*|^4\|,~~\,\,\,\left \|\left (\frac{|B|^2+|B^*|^2}{2} \right)^2 \right \|\leq \left \|\frac{|B|^4+|B^*|^4}{2} \right \| \\
		&=& \frac{1}{2} \left  \|  |B|^4+|B^*|^4 \right \|.	
	\end{eqnarray*}
Hence, Corollary \ref{cor1} refines the earlier related  inequality  $w^4(B) \leq \frac{1}{2} \left  \|  |B|^4+|B^*|^4 \right \|,$ (see \cite{EK07}, for $r=2$).
\end{remark}

We next obtain the following estimation for an upper bound of  the numerical radius of general  $2\times 2$ operator matrices, i.e.,  $w \left(\left[\begin{array}{cc}
A & B\\
C& D
\end{array}\right]\right)$.

\begin{theorem}\label{th-25}
	If $A,B,C,D\in \mathcal{B}(\mathcal{H})$, then for $0\leq \alpha \leq 1$
	\begin{eqnarray*}
		 	w^4 \left(\left[\begin{array}{cc}
			A & B\\
			C& D
		\end{array}\right]\right) &\leq&  8 \max \left \{w^4(A),w^4(D)  \right \}\\
	 &&+({1+\alpha}) \max \left\{\|  |B|^4+|C^*|^4\|, \|  |B^*|^4+|C|^4 \|  \right \}\\
	 && +2(1-\alpha) \max \left\{ w^2(BC),w^2(CB) \right\}\\
		&&+ 2 \max \left\{\|  |B|^2+|C^*|^2\|, \|  |B^*|^2+|C|^2 \|  \right \}\\
		&& \times \max \left\{ w(BC),w(CB) \right\}.
	\end{eqnarray*}		
	
\end{theorem}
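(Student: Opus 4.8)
The plan is to split the general operator matrix into its diagonal and off-diagonal parts and then combine the subadditivity of the numerical radius norm with Theorem \ref{th-7}. Writing $\mathbb{T}=\left[\begin{array}{cc} A & B\\ C & D\end{array}\right]$, I would first record the decomposition
\[
\mathbb{T}=\left[\begin{array}{cc} A & 0\\ 0 & D\end{array}\right]+\left[\begin{array}{cc} 0 & B\\ C & 0\end{array}\right].
\]
Since $w(\cdot)$ is a norm on $\mathcal{B}(\mathcal{H}\oplus\mathcal{H})$, it is subadditive; applying the triangle inequality and then Lemma \ref{lemmma}(1) to evaluate the diagonal block gives
\[
w(\mathbb{T})\leq \max\{w(A),w(D)\}+w\left(\left[\begin{array}{cc} 0 & B\\ C & 0\end{array}\right]\right).
\]

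Next I would raise this to the fourth power using the elementary convexity bound $(a+b)^4\leq 8(a^4+b^4)$, valid for $a,b\geq 0$ (the case $p=4$ of $(a+b)^p\leq 2^{p-1}(a^p+b^p)$), together with the monotonicity identity $\max\{w(A),w(D)\}^4=\max\{w^4(A),w^4(D)\}$, obtaining
\[
w^4(\mathbb{T})\leq 8\max\{w^4(A),w^4(D)\}+8\,w^4\left(\left[\begin{array}{cc} 0 & B\\ C & 0\end{array}\right]\right).
\]

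Finally I would substitute the estimate of Theorem \ref{th-7} for the term $w^4\left(\left[\begin{array}{cc} 0 & B\\ C & 0\end{array}\right]\right)$ and distribute the factor $8$; this factor cancels the denominators $8$, $4$ and $4$ occurring in Theorem \ref{th-7}, producing precisely the coefficients $1+\alpha$, $2(1-\alpha)$ and $2$ in the claimed bound. Collecting the resulting four terms then gives exactly the asserted inequality.

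The argument involves no genuinely hard step: it is purely a combination of the triangle inequality for the numerical radius norm, the diagonal evaluation of Lemma \ref{lemmma}(1), the scalar power inequality, and a direct appeal to Theorem \ref{th-7}. The only points requiring minor care are the bookkeeping of constants when distributing the factor $8$ over the three terms of Theorem \ref{th-7}, and noting that the bound $(a+b)^4\leq 8(a^4+b^4)$ is exactly what is needed to produce the leading constant $8\max\{w^4(A),w^4(D)\}$ appearing in the statement.
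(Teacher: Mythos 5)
Your proposal is correct and follows essentially the same route as the paper: the same decomposition into diagonal and off-diagonal parts, the same bound $(a+b)^4\leq 8(a^4+b^4)$, Lemma \ref{lemmma}(1), and a final appeal to Theorem \ref{th-7} with the factor $8$ clearing the denominators. The only cosmetic difference is that you invoke subadditivity of $w(\cdot)$ directly at the norm level, whereas the paper runs the triangle inequality and convexity pointwise on $\left|\left\langle \mathbb{T}x,x\right\rangle\right|$ before taking the supremum; the two arguments are interchangeable.
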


\begin{proof}
	Let $x\in \mathcal{H} \oplus \mathcal{H}$ with $\|x\|=1.$ Now have by convexity of $f(t)=t^4$,
	\begin{eqnarray*}
\left | \left \langle \left[\begin{array}{cc}
	A & B\\
	C& D
\end{array}\right]x,x \right \rangle \right|^4 &\leq & \left (\left | \left \langle \left[\begin{array}{cc}
A & 0\\
0& D
\end{array}\right]x,x \right \rangle \right| +\left | \left \langle \left[\begin{array}{cc}
0 & B\\
C& 0
\end{array}\right]x,x \right \rangle \right| \right)^4 \\
&\leq & 8\left (\left | \left \langle \left[\begin{array}{cc}
	A & 0\\
	0& D
\end{array}\right]x,x \right \rangle \right|^4 +\left | \left \langle \left[\begin{array}{cc}
	0 & B\\
	C& 0
\end{array}\right]x,x \right \rangle \right|^4 \right) \\
&\leq &8 w^4 \left (  \left[\begin{array}{cc}
	A & 0\\
	0& D
\end{array}\right]\right) +8 w^4\left( \left[\begin{array}{cc}
	0 & B\\
	C& 0
\end{array}\right] \right)\\
&=& 8 \max \left \{w^4(A),w^4(D)  \right \}+ 8 w^4\left( \left[\begin{array}{cc}
	0 & B\\
	C& 0
\end{array}\right] \right). 
	\end{eqnarray*}
Taking  supremum over all $x \in \mathcal{H}, \|x\|=1$ we have,
\begin{eqnarray*}
	w^4 \left(\left[\begin{array}{cc}
		A & B\\
		C& D
	\end{array}\right]\right) &\leq & 8 \max \left \{w^4(A),w^4(D)  \right \}+ 8 w^4\left( \left[\begin{array}{cc}
	0 & B\\
	C& 0
\end{array}\right] \right). 
\end{eqnarray*}
Therefore, by using Theorem \ref{th-7}, we get 
\begin{eqnarray*}
	&& 	w^4 \left(\left[\begin{array}{cc}
		A & B\\
		C& D
	\end{array}\right]\right)\\ &&\leq 8 \max \left \{w^4(A),w^4(D)  \right \}\\
	&&+({1+\alpha}) \max \left\{\|  |B|^4+|C^*|^4\|, \|  |B^*|^4+|C|^4 \|  \right \}\\
	&& +2(1-\alpha) \max \left\{ w^2(BC),w^2(CB) \right\}\\
	&&+ 2 \max \left\{\|  |B|^2+|C^*|^2\|, \|  |B^*|^2+|C|^2 \|  \right \} \times \max \left\{ w(BC),w(CB) \right\}.
\end{eqnarray*}	

\end{proof}

\begin{remark}
	It follows from \cite{D09} that
	$w(CB)\leq \frac{1}{2}\|  |B|^2+|C^*|^2\|~~\text{and}~~w(BC)\leq \frac{1}{2}\|  |B^*|^2+|C|^2\|.$	
	Therefore, clearly it follows that the inequality obtained in Theorem \ref{th-25} is stronger than the recently obtained inequality \cite[Th. 3.1]{BK21}, that is, 
\begin{eqnarray*}
	w^4 \left(\left[\begin{array}{cc}
		A & B\\
		C& D
	\end{array}\right]\right) &\leq&  8 \max \left \{w^4(A),w^4(D)  \right \}\\
	&&+({1+\alpha}) \max \left\{\|  |B|^4+|C^*|^4\|, \|  |B^*|^4+|C|^4 \|  \right \}\\
	&&+ (3-\alpha)\max \left\{\|  |B|^2+|C^*|^2\|, \|  |B^*|^2+|C|^2 \|  \right \} \\
	&& \times \max \left\{ w(BC),w(CB) \right\}.
\end{eqnarray*}

\end{remark}

\section{\textbf{Application}}
\noindent As application of results  obtained bounds in Section 2, we develope some norm inequalities for  sums and differences of self-adjoint operators. Note that if $B,C\in \mathcal{B}(\mathcal{H})$ are positive then  $w \left(\left[\begin{array}{cc}
0 & B\\
C& 0
\end{array}\right]\right)=\frac{\|B+C\|}{2}$, (see \cite[Cor. 3]{AK15}). Now we prove the following proposition, though it is known the proff given here is simple and different.

\begin{prop}\label{prop3.1}
	If $B,C\in \mathcal{B}(\mathcal{H})$ are positive, then 
	\begin{eqnarray*}
		&&(i)~~ \|B-C\|\leq \|B+C\|,\\
		&& (ii)~~ \max \Big \{\|B\|, \|C\| \Big\}\leq \frac{\|B+C\|}{2}+ \frac{\|B-C\|}{2}.
	\end{eqnarray*}
\end{prop}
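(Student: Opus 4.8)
The plan is to route everything through the operator matrix $\mathbb{S}=\left[\begin{array}{cc} 0 & B\\ C& 0\end{array}\right]$ and the identity $w(\mathbb{S})=\frac{1}{2}\|B+C\|$ valid for positive $B,C$ (recorded just above from \cite{AK15}), rather than arguing by direct operator inequalities. Since $B,C$ are positive they are self-adjoint, so $\mathbb{S}^{*}=\left[\begin{array}{cc} 0 & C\\ B& 0\end{array}\right]$, whence $\Re(\mathbb{S})=\frac{1}{2}\left[\begin{array}{cc} 0 & B+C\\ B+C& 0\end{array}\right]$ and $\Im(\mathbb{S})=\frac{1}{2\mathrm{i}}\left[\begin{array}{cc} 0 & B-C\\ -(B-C)& 0\end{array}\right]$. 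By the paper's norm identity for off-diagonal operator matrices (its second lemma), the norm of $\left[\begin{array}{cc}0&X\\Y&0\end{array}\right]$ is $\max\{\|X\|,\|Y\|\}$, so $\|\Re(\mathbb{S})\|=\frac{1}{2}\|B+C\|$ and $\|\Im(\mathbb{S})\|=\frac{1}{2}\|B-C\|$.

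For part (i), I would invoke the elementary lower bound $w(\mathbb{S})\geq\|\Im(\mathbb{S})\|$ used repeatedly in the proofs of Theorems \ref{th-5} and \ref{th-6}. Combining it with the equality gives $\frac{1}{2}\|B+C\|=w(\mathbb{S})\geq\|\Im(\mathbb{S})\|=\frac{1}{2}\|B-C\|$, which is exactly $\|B-C\|\leq\|B+C\|$.

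For part (ii), I would feed the same equality into Theorem \ref{th-5}. With $C^{*}=C$, that theorem reads $w(\mathbb{S})\geq\frac{1}{2}\max\{\|B\|,\|C\|\}+\frac{1}{4}\big|\,\|B+C\|-\|B-C\|\,\big|$. Substituting $w(\mathbb{S})=\frac{1}{2}\|B+C\|$ and using part (i) to resolve the absolute value (since $\|B+C\|\geq\|B-C\|$) yields $\frac{1}{2}\|B+C\|\geq\frac{1}{2}\max\{\|B\|,\|C\|\}+\frac{1}{4}\big(\|B+C\|-\|B-C\|\big)$; rearranging gives $\max\{\|B\|,\|C\|\}\leq\frac{1}{2}\|B+C\|+\frac{1}{2}\|B-C\|$, which is (ii).

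The only delicate point is the bookkeeping order: one must establish (i) first so as to drop the absolute value in Theorem \ref{th-5} with the correct sign, and one must correctly read off $\|\Re(\mathbb{S})\|$ and $\|\Im(\mathbb{S})\|$ from the off-diagonal norm identity. There is no analytic obstacle beyond that—each conclusion is a one-line rearrangement—and the novelty is precisely this routing through the numerical radius $w(\mathbb{S})$, which is presumably what is meant by a proof that is ``simple and different''.
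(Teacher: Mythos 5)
Your proposal is correct and takes essentially the same route as the paper's own proof: both arguments pivot on the identity $w(\mathbb{S})=\frac{1}{2}\|B+C\|$ from \cite{AK15}, where $\mathbb{S}=\left[\begin{array}{cc} 0 & B\\ C & 0\end{array}\right]$, and your derivation of (ii) from Theorem \ref{th-5} together with part (i) is exactly the paper's. The only deviation is in (i), where the paper substitutes the equality into the first inequality of Theorem \ref{th-4} while you invoke the underlying bound $w(\mathbb{S})\geq\|\Im(\mathbb{S})\|$ directly (reading off $\|\Im(\mathbb{S})\|=\frac{1}{2}\|B-C\|$ from the off-diagonal norm lemma); since that bound is precisely what proves the first inequality of Theorem \ref{th-4}, this is a mild streamlining rather than a different argument.
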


\begin{proof}
From the first inequality in \ref{th-4}, we have
	\begin{eqnarray*}
\frac{\|B+C\|^2}{4} &\geq & \frac{1}{4\sqrt{2}}\left[\|B+C\|^4+\|B-C\|^4 \right]^{\frac{1}{2}}.
	\end{eqnarray*}
This implies that 	$\|B-C\|\leq \|B+C\|$, i.e, (i).  Now from Theorem \ref{th-5} we have,
\[\frac{\|B+C\|}{2} \geq \frac{1}{2} \max \{\|B\|, \|C\|\}+ \frac{1}{4}|\|B+C\|-\|B-C\| |.\]
Therefore, using (i) we have,
\[\frac{\|B+C\|}{2} \geq \frac{1}{2} \max \{\|B\|, \|C\|\}+ \frac{1}{4}(\|B+C\|-\|B-C\| ).\]
This completes the proof of (ii).
\end{proof}
 
Next we prove the following.

\begin{theorem}\label{prop3.3}
	
	Let $B,C\in \mathcal{B}(\mathcal{H})$ be self-adjoint. Then,	
	\[\max \Big\{\|B+C\|^2, \|B-C\|^2\Big\} \leq \left \|B^2+C^2 \right\|+2w(|B||C|).\]
	
\end{theorem}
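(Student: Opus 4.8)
The plan is to sandwich the numerical radius of the off-diagonal operator matrix $\mathbb{S}=\left[\begin{array}{cc} 0 & B\\ C& 0\end{array}\right]$ between a lower bound coming from its real and imaginary parts and the upper bound supplied by Theorem \ref{th-2}, then exploit self-adjointness to collapse every $\max$ that appears. First I would record the consequence of self-adjointness: since $B=B^*$ and $C=C^*$ we have $|B|=|B^*|$, $|C|=|C^*|$, and hence $|B|^2=B^2$, $|C|^2=C^2$, together with $|B||C^*|=|B||C|$ and $|C||B^*|=|C||B|$.

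Next I would establish the lower bound. Using $w(\mathbb{S})\geq\|\Re(\mathbb{S})\|$ and $w(\mathbb{S})\geq\|\Im(\mathbb{S})\|$ (the inequalities already invoked in the proofs of Theorems \ref{th-5} and \ref{th-6}), and computing that for self-adjoint $B,C$ one has $\Re(\mathbb{S})=\left[\begin{array}{cc} 0 & \frac{B+C}{2}\\ \frac{B+C}{2}& 0\end{array}\right]$ and $\Im(\mathbb{S})=\frac{1}{2\mathrm{i}}\left[\begin{array}{cc} 0 & B-C\\ -(B-C)& 0\end{array}\right]$, Lemma \ref{lemmma}(2) (or the second lemma) gives $\|\Re(\mathbb{S})\|=\frac{\|B+C\|}{2}$ and $\|\Im(\mathbb{S})\|=\frac{\|B-C\|}{2}$. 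Consequently
\[
w^2(\mathbb{S})\geq \frac{1}{4}\max\left\{\|B+C\|^2,\|B-C\|^2\right\}.
\]

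For the upper bound I would apply Theorem \ref{th-2} directly and substitute the identities above: the first term becomes $\frac{1}{4}\max\{\|B^2+C^2\|,\|B^2+C^2\|\}=\frac{1}{4}\|B^2+C^2\|$, while the second term reads $\frac{1}{2}\max\{w(|B||C|),w(|C||B|)\}$. Here is the one point requiring a small observation rather than mere substitution: since $|B|$ and $|C|$ are positive, $(|B||C|)^*=|C||B|$, and because the numerical radius is $*$-invariant we get $w(|C||B|)=w(|B||C|)$, so the $\max$ collapses to $w(|B||C|)$. Thus $w^2(\mathbb{S})\leq \frac{1}{4}\|B^2+C^2\|+\frac{1}{2}w(|B||C|)$.

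Chaining the two bounds yields $\frac{1}{4}\max\{\|B+C\|^2,\|B-C\|^2\}\leq \frac{1}{4}\|B^2+C^2\|+\frac{1}{2}w(|B||C|)$, and multiplying by $4$ gives the claimed inequality. The only genuinely nontrivial step is the $*$-invariance argument that equates $w(|C||B|)$ with $w(|B||C|)$; everything else is bookkeeping that reduces each $\max$ to a single term. I expect no serious obstacle, the whole difficulty being packaged inside Theorem \ref{th-2}, which is assumed available.
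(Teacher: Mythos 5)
Your proposal is correct and follows essentially the same route as the paper's proof: the lower bound $\frac{1}{4}\max\{\|B+C\|^2,\|B-C\|^2\}\leq w^2\left(\left[\begin{smallmatrix} 0 & B\\ C & 0\end{smallmatrix}\right]\right)$ via the real and imaginary parts, combined with the upper bound of Theorem \ref{th-2}. The only difference is that you make explicit the $*$-invariance argument $w(|C||B|)=w\bigl((|B||C|)^*\bigr)=w(|B||C|)$ that collapses the maximum, a step the paper leaves implicit.
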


\begin{proof}
We have $$w\left(\left[\begin{array}{cc}
0 & B\\
C& 0
\end{array}\right]\right)\geq \left\|\Re \left(\left[\begin{array}{cc}
0 & B\\
C& 0
\end{array}\right]\right) \right\|$$ 
and 
$$ w\left(\left[\begin{array}{cc}
0 & B\\
C& 0
\end{array}\right]\right)\geq \left\|\Im \left( \left[\begin{array}{cc}
0 & B\\
C& 0
\end{array}\right]\right) \right\|$$
so that 
	  $$  w\left(\left[\begin{array}{cc}
	0 & B\\
	C& 0
	\end{array}\right]\right) \geq \left \|\frac{B+C}{2}\right \|$$ and $$  w\left(\left[\begin{array}{cc}
	0 & B\\
	C& 0
	\end{array}\right]\right)\geq \left \|\frac{B-C}{2}\right \|$$ respectively.
  Therefore,
	\begin{eqnarray*}
		 \frac{1}{4} \max \left \{  \left \|{B+C}\right \|^2,  \left \|{B-C}\right \|^2 \right \} &\leq& w^2\left(\left[\begin{array}{cc}
		 	0 & B\\
		 	C& 0
		 \end{array}\right]\right).
	 \end{eqnarray*}
	Hence, using Theorem \ref{th-2} we get
	\[\max \Big\{\|B+C\|^2, \|B-C\|^2\Big\} \leq \left \|B^2+C^2 \right\|+2w(|B||C|).\]
	 This completes the proof.
	
\end{proof}

\begin{remark} \label{rem}
(i)	It follows from the triangle inequality of the numerical radius that if $B,C\in \mathcal{B}(\mathcal{H})$ are self-adjoint, then	
\[\max \Big\{\|B+C\|^2, \|B-C\|^2\Big\} \leq \left \|B^2+C^2 \right\|+2w(BC).\]
(ii) Clearly, if $B,C$ are positive then the inequalities in Theorem \ref{prop3.3} and Remark \ref{rem}(i) are same.
In \cite{K2002}, Kittaneh proved that if $B,C\in \mathcal{B}(\mathcal{H})$ are positive, then
	\begin{eqnarray*}
\|B+C\|&\leq &\frac{1}{2}\left[ \|B\|+\|C\|+\sqrt{(\|B\|-\|C\|)^2+4\left \|B^{1/2}C^{1/2} \right\|^2}\right].
	\end{eqnarray*}
 In the example given below, we note that the bound obtained in Theorem \ref{prop3.3} (for positive operators) is better than that in  \cite{K2002}. Consider $B=\left[\begin{array}{cc}
4 & 0\\
0& 0
\end{array}\right]$ and $C=\left[\begin{array}{cc}
1& 0\\
0& 2
\end{array}\right].$ Then, Theorem \ref{prop3.3} gives $\|B+C\|\leq 5$,  whereas \cite{K2002} gives $ $ $\|B+C\|\leq 3+\sqrt{5}$.
\end{remark}

\begin{remark}
	Let $B,C\in \mathcal{B}(\mathcal{H})$ be self-adjoint. 
	It follows from Theorem \ref{prop3.3} and Remark \ref{rem}(i) that if  $\|B+ C\|=\|B\|+\|C\|,$ then
	\begin{eqnarray*}
		&&(i)~~ \left \|B^2+C^2 \right\|=\|B\|^2+\|C\|^2,\\
		&& (ii)~~ w(|B||C|)=\|BC\|=\|B\|\|C\|=w(BC).
	\end{eqnarray*}
The converse of the above result does not hold, in general. As for example consider $B=\left[\begin{array}{cc}
1 & 0\\
0& 1
\end{array}\right]$ and $C=\left[\begin{array}{cc}
-1& 0\\
0& -1 
\end{array}\right].$ Then we see that $\left \|B^2+C^2 \right\|=\|B\|^2+\|C\|^2=2$ and $w(|B||C|)=\|BC\|=\|B\|\|C\|=w(BC)=1$, but $0=\|B+C\| \neq \|B\|+\|C\|=2.$
We note that (see \cite{K2002}) when $B,C$ are positive, then  $\|B+C\|=\|B\|+\|C\|$ if and only if $\|BC\|=\|B\|\|C\|.$
\end{remark}

\bibliographystyle{amsplain}

\end{document}